\renewcommand{\geq}{\geqslant}
\renewcommand{\leq}{\leqslant}
\newtheorem{theorem}{Theorem}
\newtheorem{prop}{Proposition}[section]
\newtheorem*{main-theorem}{Main Theorem}
\newtheorem*{theorem*}{Theorem}
\theoremstyle{definition}
\newtheorem{remark}[prop]{Remark}
\newtheorem*{remark*}{Remark}
\numberwithin{equation}{section}
\def\phi{\varphi}
\def\Re{\,\mathrm{Re}\,}
\def\Im{\,\mathrm{Im}\,}
\def\phi{\varphi}
\def\be{\begin{eqnarray*}}
\def\ee{\end{eqnarray*}}
\def\ben{\begin{eqnarray}}
\def\een{\end{eqnarray}}
\def\L2R{L_{\text{Rest}}^2}
\def\11{\mathds{1}}
\def\L2c{L^2_{\text{comp}}}
\numberwithin{equation}{section}
\subjclass[2010]{34L20, 35Pxx, 35Q93 58J40, 93Dxx}
\title[Kelvin-Voigt damping]{Decays for Kelvin-Voigt damped wave equations I : the black box perturbative method}
\author{N. Burq}
\address{Laboratoire de Math\'ematiques d'Orsay, Universit\'e Paris-Saclay, CNRS UMR 8628, B\^atiment~307, 91405 Orsay Cedex \& Institut Universitaire de France}
 \email{nicolas.burq@universite-paris-saclay.fr}
 \date{\empty}
\begin{document}

\begin{abstract}   We show in this article how perturbative approaches~\cite{BuHi05} (see also~\cite{AL14}) and the {\em black box} strategy from~\cite{BuZw03} allow to obtain decay rates for  Kelvin-Voigt damped wave equations  from quite standard resolvent estimates : Carleman estimates or geometric control estimates for Helmoltz equationCarleman   or other resolvent estimates for the Helmoltz equation. Though in this context of Kelvin Voigt damping, such approach is unlikely to allow for the optimal results when additional geometric assumptions are considered (see~\cite{BuCh, Bu19}), it turns out that  using this method, we can obtain the usual logarithmic decay which is optimal in general cases. We also present some applications of this approach giving decay rates in some particular geometries (tori).

\ \vskip .1cm 
\noindent {\sc R\'esum\'e.}  On montre dans cet article comment l'approche perturbative inspirée par~\cite{BuHi05} (voir aussi~\cite{AL14}) peut être combinée avec  une stratégie de type  {\em boite noire} de\cite{BuZw03} pour obtenir des taux de décroissance pour les équations d'ondes avec amortissement de Kelvin-Voigt, \`a partir d'estimations de résolvante tr\`es standard: estimées de Carleman ou estimées de contr\^ole géométrique pour l'équation de Helmoltz. Bien que dans ce contexte d'amortissement de Kelvin-Voigt, on ne s'attende pas en général à ce que les résultats obtenus par une telle approche soient optimaux en termes de taux de décroissance (voir~\cite{BuCh, Bu19}), dans le cas le plus général (c'est à dire sans aucune hypothèse géométrique sur la fonction d'amortissement, on obtient bien le résultat optimal. On présente aussi des applications de cette approche donnant des résultats dans le cas de géométries particulières (tores).

\end{abstract} 

\ \vskip -1cm \hrule \vskip 1cm 
 \maketitle

{ \textwidth=4cm \hrule}

\section{Introduction}
\label{S:intro}
In  this paper we are interested in decay rates for Kelvin-Voigt damped wave equations. 
  
  We work in a smooth bounded domain $\Omega \subset \mathbb{R}^d$ and consider the following equation 
\begin{equation}
\left\{ \begin{aligned} &(\partial_t ^2 - \Delta) u - \text{ div} a(x) \nabla_x  \partial_t u =0 \\
& u \mid_{t=0} = u_0 \in H^1_0( \Omega),  \quad \partial_t u \mid_{t=0} = u_1 \in L^2 ( \Omega)\\
&u \mid_{\partial \Omega}  =0 \end{aligned}
\right. 
\end{equation}
with a non negative damping term $a(x)$. The solution can be written as 
\begin{equation}
 U(t) = \begin{pmatrix} u \\ \partial_t u \end{pmatrix} = e^{\mathcal{A} t } \begin{pmatrix} u_0 \\ u_1 \end{pmatrix},
 \end{equation}
 where the generator $\mathcal{A}$ of the semi-group is given by 
 $$ \mathcal{A} = \begin{pmatrix} 0 & 1 \\ \Delta &  \text{ div} a \nabla \end{pmatrix}   $$
 with domain 
 $$ {D}( \mathcal{A}) = \{  (u_0, u_1) \in H^1_0\times L^2;  \Delta u_0 +\text{ div} a \nabla u_1 \in L^2; u_1 \in H^1_0\}.$$ 
The energy of solutions 
$$E((u_0, u_1) (t) = \int_{\Omega} (|\nabla_x u | ^2 + |\partial_t u| ^2) dx $$
satisfies 
$$ E(u_0, u_1) (t) - E(u_0, u_1) (0) = - \int_{0}^t\int_{\Omega} a(x) |\nabla_x \partial_t u |^2 (s,x) ds$$
It was proved in~\cite[Theorem 3]{BuCh} (see also~\cite{LiuRa, Te16} for related results) that if $a$ is smooth, vanishing nicely and the region $\{ x\in \Omega; a(x) >0\}$ controls geometrically $\Omega$, then the rate of decay of the energy is exponential 
$$ \exists c, C>0; \forall (u_0, u_1) \in H^1_0 \times L^2, E((u_0, u_1)) (t) \leq C e^{-ct} E(u) (0).
$$
In this article, we investigate different cases where we can obtain a decay rate  using the perturbative methods from~\cite{BuHi05, BuCh}. In this setting we prove the usual (optimal if the trapping is strong, see Appendix~\ref{app.B}) logarithmic decay assuming only that the damping function $a$ is bounded away from $0$ on a non trivial open set (see~\cite{AmHaRo18} for a similar results with piecewise constant dampings and~\cite{RoZh18} for a weaker decay rate under the same assumptions).  We also show that the same perturbation method applies in different situations (geometric control, arbitrary open set in tori) and give polynomial rates of decay depending on the geometric assumptions considered (see also~\cite{Te16, Zh18}). Our main results are the following

\begin{theorem}\label{th.2}Let $\Omega\subset \mathbb{R}^d$ be a smooth ($C^2$) domain
Assume that the damping term $a\geq 0$ is in  $ L^\infty( {\Omega})$  is bounded away from $0$ on an open set ~$\omega \subset\Omega$ i.e. satisfies
\begin{equation}\label{carre}
\exists \delta>0; \exists \omega\subset \Omega \text{ open };  \forall x  \in \omega,  a(x) \geq \delta.
\end{equation}

Then for any $k$ there exists $C>0$ such that for any $(u_0, u_1) \in D( \mathcal{A}^k)$
$$ E(u_0, u_1) (t) \leq \frac C {\log(t) ^{2k}} \| (u_0, u_1)\|_{D( \mathcal{A}^k)}^2
$$ 

\end{theorem}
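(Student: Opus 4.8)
## Proof Strategy

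The plan is to reduce the energy decay statement to a resolvent estimate on the imaginary axis, following the Hörmander--Burq--Hitrik scheme as recalled in the introduction, and then to establish that resolvent bound by a Carleman-type argument. Concretely, by the Borichev--Tomilov / Batty--Duyckaerts correspondence between semigroup decay rates and resolvent growth, the bound
$$ E(u_0,u_1)(t) \leq \frac{C}{\log(t)^{2k}} \|(u_0,u_1)\|_{D(\mathcal{A}^k)}^2 $$
will follow once we prove that $i\reals \subset \rho(\mathcal{A})$ and that the resolvent grows at most exponentially along the imaginary axis:
$$ \|(\mathcal{A} - i\lambda)^{-1}\|_{\LL(\HH)} \leq C e^{C|\lambda|}, \qquad \lambda \in \reals, $$
where $\HH = H^1_0(\Omega) \times L^2(\Omega)$. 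The passage from a resolvent bound $\|(\mathcal{A}-i\lambda)^{-1}\| \lesssim F(\lambda)$ to a decay rate governed by $F^{-1}$ is the standard black-box mechanism of~\cite{BuZw03}, with $F$ exponential producing the logarithmic rate and the power $2k$ coming from the iteration on $D(\mathcal{A}^k)$.

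The core analytic step is therefore the stationary estimate. Writing $U=(u,v)$ and solving $(\mathcal{A}-i\lambda)U = (f,g)$, one eliminates $v = i\lambda u + f$ and is left with a Helmholtz-type equation of the form
$$ (-\Delta - \lambda^2) u - i\lambda\, \div(a \nabla u) = \tilde g $$
with Dirichlet conditions, where $\tilde g$ is controlled by $\|(f,g)\|_\HH$. The term $-i\lambda \div(a\nabla u)$ is the Kelvin--Voigt damping; the key observation (this is the perturbative idea of~\cite{BuHi05,BuCh}) is that pairing with $u$ and taking imaginary parts yields $\lambda \int a |\nabla u|^2 \lesssim \|u\|_\HH \|(f,g)\|_\HH$, so the solution is \emph{a priori} small, on the order $\delta^{-1}\lambda^{-1}$ times lower-order norms, on the observation set $\omega$. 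One then feeds this smallness into a Carleman estimate for the Laplacian: since $\omega$ is a nontrivial open set and $\Omega$ is connected with $C^2$ boundary, the quantitative unique continuation / Carleman inequality gives control of $u$ on all of $\Omega$ in terms of its restriction to $\omega$, at the cost of an exponential factor $e^{C|\lambda|}$ in the frequency. Combining the two bounds closes the estimate and yields the exponential resolvent bound above. Some care is needed near $\lambda = 0$ (where one uses invertibility of $\mathcal{A}$, following from the standard strong stability argument: $\mathcal{A}$ has no eigenvalue on $i\reals$ because $a>0$ on $\omega$ forces $\nabla u = 0$ on $\omega$, hence $u \equiv 0$ by unique continuation), and for bounded $\lambda$, where a compactness-uniqueness argument upgrades the absence of eigenvalues to a uniform bound.

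The main obstacle I anticipate is handling the low regularity of the damping and of the term $\div(a\nabla u)$ in the Carleman scheme: with $a$ merely $L^\infty$, the perturbation $\div(a\nabla u)$ is genuinely of the same order as $-\Delta$, not lower order, so one cannot simply absorb it. The way around this is precisely the black-box philosophy: one does not try to prove a Carleman estimate \emph{for the damped operator}, but rather uses the \emph{undamped} Carleman estimate for $-\Delta-\lambda^2$ and treats $i\lambda\div(a\nabla u)$ as a source term, using that the a priori damping estimate already makes $\sqrt{a}\,\nabla u$ small in $L^2$ and hence, localizing where $a\geq\delta$, makes $\nabla u$ itself small there. One must also be careful that the Carleman estimate is applied with weights adapted to the geometry of $\omega$ and $\partial\Omega$, and that the exponential constants from the iteration $D(\mathcal{A})\to D(\mathcal{A}^k)$ are tracked so that they only affect the constant $C$ and the power $2k$, not the logarithmic nature of the rate. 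Finally, the boundary terms in the Carleman estimate must be controlled using the Dirichlet condition together with the elliptic regularity $u_1 \in H^1_0$, which is exactly what the domain $D(\mathcal{A})$ provides.
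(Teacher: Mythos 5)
Your overall strategy coincides with the paper's: reduce to an exponential resolvent bound on the imaginary axis (low and bounded frequencies by a compactness--uniqueness and unique continuation argument, as in Proposition~\ref{resolv.1}; high frequencies by the a priori damping estimate plus a Carleman argument, as in Proposition~\ref{resolv.2}), then invoke the abstract passage from exponential resolvent growth to logarithmic decay with the $2k$ power for data in $D(\mathcal{A}^k)$. However, the high-frequency step as you describe it does not close, for a concrete reason: pairing the equation with $\bar u$ and taking imaginary parts controls only $\sqrt{a}\,\nabla_x u$ in $L^2$, i.e.\ the \emph{gradient} of $u$ on $\omega$, whereas the Carleman / quantitative unique continuation inequality you want to feed this into observes $u$ \emph{itself} on $\omega$ (in its classical Lebeau--Robbiano form, even its $H^1(\omega)$ norm). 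Your phrase ``the solution is a priori small \dots on the observation set $\omega$'' is exactly what is not yet proved: nothing in your outline produces smallness of $u|_\omega$, so ``combining the two bounds'' is not possible as stated. The paper bridges this mismatch with an additional multiplier computation: choosing $b\in C^\infty_0$ nontrivial with $0\leq b\leq a$, pairing the equation with $b^2\bar u$ and taking real parts gives $\lambda^2\|bu\|_{L^2}^2\leq C\bigl(\|a\nabla_x u\|_{L^2}^2+\|a\nabla_x u\|_{L^2}\|bu\|_{L^2}+ \text{data terms}\bigr)$, which converts gradient smallness into smallness of $bu$ in $L^2$. This is a genuinely missing idea, not a detail.

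A second, related gap is the Carleman black box itself. Your application needs a source term merely in $H^{-1}$ (since $\lambda\,\mathrm{div}(a\nabla_x u)$ with $a\in L^\infty$ is no better than $H^{-1}$, with norm controlled by $|\lambda|\,\|a\nabla_x u\|_{L^2}$) and, after the step above, only an $L^2$ observation of $bu$; the standard Carleman observation estimate is stated with $L^2$ sources and an $H^1(\omega)$ observation. The paper devotes Proposition~\ref{carle} to precisely this upgrade, and its proof is not a formality: a Caccioppoli-type estimate trades the local $H^1$ observation for $|\tau|\,\|bu\|_{L^2}$, then one inverts the complex-absorbed operator $\Delta+\tau^2\pm i b^2$ from $L^2$ to $H^1_0$ with bound $Ce^{c|\tau|}$ and dualizes to reach $H^{-1}$ data. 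With these two ingredients supplied, your scheme becomes the paper's proof; the remaining points you list (the semigroup reduction, absorption of the resulting $\|u\|_{H^1}^{1/2}$ factor by Young's inequality at the cost of enlarging the exponential, the low-frequency analysis) are handled as you describe.
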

\begin{theorem}\label{th.0}
Let $\Omega$ be a smooth ($C^3$) domain. Assume that the $0\leq a \in L^\infty({\Omega})$ satisfies 
\begin{itemize}
\item[A1] There exists $\delta>0$ such that the interior of the set $\omega= \{a>\delta\}$, controls geometrically $\Omega$. i.e. all rays of geometric optics (straight lines) reflecting on the boundary according to the laws of geometric optics eventually reach the set $\omega$ in finite time.
\end{itemize}
Then the energy decays polynomially
 $$\exists C, c>0; \forall (u_0, u_1) \in D(\mathcal{A}^k ),   E(u_0, u_1)) (t) \leq \frac C {(1+ |t|)^k}\|(u_0, u_1) \|^2_{D(\mathcal{A}^k)}.$$
\end{theorem}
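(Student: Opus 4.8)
The plan is to reduce the polynomial‐decay statement of Theorem~\ref{th.0} to a resolvent estimate for the generator $\mathcal{A}$ on the imaginary axis, via the standard Borichev--Volberg/Batty--Duyckaerts characterization: if $i\reals \subset \rho(\mathcal{A})$ and $\|(\mathcal{A}-i\lambda)^{-1}\| = O(|\lambda|^{\alpha})$ as $|\lambda|\to\infty$, then $\|e^{t\mathcal{A}}\mathcal{A}^{-1}\| = O(t^{-1/\alpha})$, and iterating gives the stated rate with $k$ powers of the domain norm. So it suffices to prove $\|(\mathcal{A}-i\lambda)^{-1}\|_{\mathcal{H}\to\mathcal{H}} = O(1)$ (i.e. $\alpha = 1$) for $\lambda$ real and large, where $\mathcal{H} = H^1_0\times L^2$.

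The heart of the matter is therefore the following: given $F = (f,g) \in \mathcal{H}$ and $U = (u,v)$ with $(\mathcal{A}-i\lambda)U = F$, one has $v = i\lambda u - f$ and the second component yields a stationary equation of Helmholtz type, schematically $(\Delta + \lambda^2)u + i\lambda\,\div(a\nabla u) = \text{(data)}$, with an extra Kelvin--Voigt term $\div(a\nabla u)$ carrying the damping. The plan is to exploit the perturbative/black-box philosophy announced in the introduction: treat $i\lambda\,\div(a\nabla\cdot)$ as a perturbation of $-\Delta-\lambda^2$, use the standard energy identity to extract, from $\Im\langle(\mathcal{A}-i\lambda)U,U\rangle$, a damping bound $\int a|\nabla u|^2 \lesssim \lambda^{-1}\|F\|\,\|U\|$ (so $\sqrt a\,\nabla u$ is small in $L^2$), and then feed this into the geometric control estimate for the Helmholtz equation on $\omega$ furnished by assumption A1 — which gives, for the undamped Helmholtz resolvent localized away from the control region, the bound $\|u\|_{H^1} \lesssim \|\text{data}\|_{L^2} + \|\nabla u\|_{L^2(\omega)}$ (the Bardos--Lebeau--Rauch / Lebeau--Robbiano type observability inequality, uniform in $\lambda$ under A1). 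Combining these two ingredients and absorbing the small terms should close the estimate and yield $\|U\|_{\mathcal{H}} \lesssim \|F\|_{\mathcal{H}}$.

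In more detail the key steps, in order, are: (i) set up the resolvent equation and check $i\reals\subset\rho(\mathcal{A})$ (absence of eigenvalues on the imaginary axis, via unique continuation from $a|\nabla u|^2 = 0$ on the open set $\omega$ together with the Helmholtz equation — here one uses that $\omega$ controls geometrically, or even just that $\omega$ is open and unique continuation holds); (ii) derive the damping estimate $\int_\Omega a|\nabla u|^2\,dx \lesssim |\lambda|^{-1}\|F\|_{\mathcal{H}}\|U\|_{\mathcal{H}}$ from the real/imaginary parts of the quadratic form; (iii) invoke the geometric control resolvent estimate for $-\Delta-\lambda^2$ with observation on $\{a>\delta\}\subset\omega$ to bound $\|u\|_{H^1_\lambda}$ ($\lambda$-semiclassically rescaled) by the $L^2$ data plus $\|\nabla u\|_{L^2(\omega)}$; (iv) on $\omega$, since $a\geq\delta$ there, $\|\nabla u\|_{L^2(\omega)}^2 \leq \delta^{-1}\int a|\nabla u|^2 \lesssim |\lambda|^{-1}\|F\|\|U\|$, which is lower order and absorbed; (v) control $v = i\lambda u - f$ in $L^2$ and conclude $\|U\|_{\mathcal{H}}\lesssim\|F\|_{\mathcal{H}}$, i.e. $\alpha=1$; (vi) quote Borichev--Volberg and iterate on $D(\mathcal{A}^k)$ to get the $t^{-k}$ rate.

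The main obstacle I anticipate is step (iii): making the geometric control estimate for the Helmholtz equation genuinely \emph{uniform in $\lambda$} and robust under the Kelvin--Voigt perturbation, i.e. checking that the perturbative black-box machinery (as in~\cite{BuHi05,BuZw03}) applies when the perturbing operator $i\lambda\,\div(a\nabla\cdot)$ is of the \emph{same order} as the principal part rather than lower order — this is precisely why the damping estimate in step (ii) gains a factor $|\lambda|^{-1}$, which must be enough to compensate. A secondary technical point is that $a$ is only $L^\infty$ (not smooth and not bounded below everywhere), so $\div(a\nabla\cdot)$ must be handled in the weak/form sense throughout and one cannot integrate by parts freely; care is needed to ensure all the cross terms are meaningful and that the $C^3$ regularity of $\partial\Omega$ (needed for elliptic regularity and for propagation of singularities with reflections) is used only where legitimate.
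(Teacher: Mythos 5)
Your overall architecture is the same as the paper's (Borichev--Tomilov reduction to a resolvent bound on $i\reals$, the damping estimate from the quadratic form, a black-box geometric-control estimate for the Helmholtz equation with the Kelvin--Voigt term moved to the right-hand side, and the low-frequency/injectivity argument), but the quantitative claim at the decisive step is wrong, and this is not a bookkeeping slip that can be waved away. The quadratic-form identity gives $\int_\Omega a|\nabla v|^2\,dx\le \|F\|_{\mathcal H}\|U\|_{\mathcal H}$ with $v\approx i\lambda u$, i.e. a \emph{square-root} gain: $\|a\nabla v\|_{L^2}\lesssim(\|F\|\|U\|)^{1/2}$, not a bound by $\lambda^{-1}\|F\|$. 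The term you must treat as a source in the Helmholtz equation for $v$ is $i\lambda\,\mathrm{div}(a\nabla v)$, whose $H^{-1}$ norm is therefore $\lesssim\lambda(\|F\|\|U\|)^{1/2}$; feeding this into the geometric-control estimate and absorbing $\|U\|^{1/2}$ by Young's inequality unavoidably produces $\|U\|\lesssim\lambda^{2}\|F\|$, not $\|U\|\lesssim\|F\|$. This $\lambda^{2}$ loss is intrinsic to the perturbative scheme, and it is exactly what the paper proves (Proposition~\ref{resolv.3}) and exactly what is needed: $\|(\mathcal A-i\lambda)^{-1}\|=O(\lambda^{2})$ gives, via Borichev--Tomilov, $\|e^{t\mathcal A}U_0\|\lesssim t^{-1/2}\|U_0\|_{D(\mathcal A)}$, hence $E\lesssim t^{-k}\|U_0\|^2_{D(\mathcal A^k)}$ after iteration. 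Your claimed uniform bound (``$O(1)$, i.e. $\alpha=1$'' --- note these are already inconsistent with each other) would imply \emph{exponential} decay of the energy for merely $L^\infty$ damping, which is known to be false in general for non-smooth Kelvin--Voigt damping (this is the overdamping phenomenon discussed in the paper's remarks and in \cite{LiZh16,LiLiZh17}), and it would make the $D(\mathcal A^k)$ hypothesis in the statement pointless. So the assertion in your step (iii)--(v) that the $|\lambda|^{-1}$ gain from the damping ``must be enough to compensate'' and close at no loss is precisely the point where the argument fails.

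Two secondary gaps: first, the observation term produced by the black-box estimate is $\|v\|_{L^2(\omega)}$ (equivalently $\lambda\|u\|_{L^2(\omega)}$), not $\|\nabla u\|_{L^2(\omega)}$; your step (iv) does not control it. The paper handles it by noting that the equation gives $\|v\|_{H^{-1}(\omega)}=O(\lambda^{-1})$ and interpolating with the damping bound on $\|\sqrt a\,\nabla v\|_{L^2(\omega)}$ (using $a\ge\delta$ on $\omega$) to get $\|v\|_{L^2(\omega)}=o(\lambda^{-1})\cdot$(appropriate factor); a gradient-observation variant of the Helmholtz estimate as you state it is nonstandard and would itself need proof. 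Second, since $a\in L^\infty$ only, the source $\mathrm{div}(a\nabla v)$ lives in $H^{-1}$, so the black-box estimate must be stated with an $H^{-1}$ right-hand side with the correct $\lambda$-weights (as in Proposition~\ref{helmoltz-geo}, where the $H^{-1}$ datum costs one more power of $\lambda$ than the $L^2$ datum); your version with ``$\|\mathrm{data}\|_{L^2}$'' is not applicable as written. With these corrections --- proving the $\lambda^{2}$ resolvent bound rather than a uniform one, and the interpolation step on $\omega$ --- your outline becomes the paper's proof.
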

\begin{remark}
In the statement above, we did not include the assumption that $\partial \Omega$ has no maximal order tangency contact with its tangents as it is {\em unnecessary} for Melrose and Sj\"ostrand propagation of singularities resuls~\cite{MeSj78}, and hence, for the geometric control condition to imply exact controlability. It is usually added to ensure {\em uniqueness} of generalised bicharacteristics (and hence the existence of a flow)
\end{remark}
\begin{remark}
It is worthwhile to compare this result with the analogous result with classical damping $a(s) \partial_t u$. In this latter case, with a seemingly {\em weaker} damping, under geometric control condition, a {\em stronger} exponential decay is achieved~\cite{BaLeRa92}. This phenomenon is due to {\em overdamping} as explained in~\cite{LiZh16,LiLiZh17}. These works highlight the fact that the smoothness of the damping function plays an important role in achieving strong stabilisation (see also~\cite{BuHi05} where the same phenomenon arises in a different context).  
\end{remark}
In the particular case of the rectangles (in dimension $2$) or hypercubes (in higher dimensions), and Dirichlet boundary conditions, the geometric control condition can be dropped (leading to a slower decay). 
 \begin{equation}
\left\{ \begin{aligned} &(\partial_t ^2 - \Delta) u - \text{ div} a(x) \nabla_x  \partial_t u =0 \\
& u \mid_{t=0} = u_0 \in H^1( \Omega),  \quad \partial_t u \mid_{t=0} = u_1 \in L^2 ( \Omega)\\
&u \mid_{\partial \Omega}  =0 \end{aligned}
\right. 
\end{equation}
\begin{theorem}\label{th.1}Let $\Omega= \prod_{i=1}^n(0, a_i) \subset \mathbb{R}^d$ be a cube in $\mathbb{R}^d$. 
Assume 
\begin{itemize}
\item If $n>2$, the non negative damping term $a\in L^\infty$ is bounded away from $0$ on an open set ~$\omega \subset\Omega$ (i.e. satisfies~\eqref{carre} 
\item If $n=2$, the non negative damping term $a\in L^\infty$ is non trivial  (i.e. it is enough to assume $ \int_\Omega a(x) dx >0$ which is weaker than~\eqref{carre}).
\end{itemize} 

Then for any $k\in \mathbb{N}$, there exists $C$ such that for any $(u_0, u_1) \in D(\mathcal{A}^k)$ 
$$E((u_0, u_1)) (t) \leq \frac{ C} { (1+ |t|)^{\frac k 2}} \| (u_0, u_1)\|^2_{D( \mathcal{A} ^k)}.$$
\end{theorem}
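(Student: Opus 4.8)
The plan is to mimic the argument used for Theorems~\ref{th.2} and~\ref{th.0}. By the Borichev--Tomilov characterisation of polynomial stability (resolvent bounds on the imaginary axis are equivalent to decay of the semigroup on the domains $D(\mathcal{A}^k)$), it suffices to show that $i\reals\subset\rho(\mathcal{A})$ and that
\[
\|(\mathcal{A}-is)^{-1}\|_{\mathcal{L}(\mathcal{H})}=O(\langle s\rangle^{4}),\qquad |s|\to\infty,\qquad \mathcal{H}=H^1_0(\Omega)\times L^2(\Omega),
\]
$\mathcal{H}$ being equipped with the energy norm; the estimate of Theorem~\ref{th.1} is then the corresponding ``moment'' statement. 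That $i\reals^{*}\subset\rho(\mathcal{A})$ is elementary: if $(\mathcal{A}-is)(u,isu)=0$ with $s\neq 0$, then $\div\bigl((1+isa)\nabla u\bigr)+s^2u=0$, and taking the imaginary part of the pairing against $\bar u$ gives $s\int_\Omega a|\nabla u|^2=0$, hence $a\nabla u=0$ a.e.; the equation thus reduces to $(-\Delta-s^2)u=0$ on $\Omega$, so $u$ is real-analytic and its gradient vanishes on the positive-measure set $\{a>0\}$, forcing $\nabla u\equiv0$, and the Dirichlet condition gives $u\equiv0$. Finally $0\in\rho(\mathcal{A})$ by the Dirichlet condition as well.

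For the resolvent bound one runs the black-box perturbative machinery of~\cite{BuHi05,BuZw03} exactly as in the proof of Theorem~\ref{th.0}. With $U=(u,v)=(\mathcal{A}-is)^{-1}F$, $F=(f,g)\in\mathcal{H}$, one has $v=isu+f$ and
\[
(-\Delta-s^2)u-is\,\div a\nabla u=\widetilde F,\qquad \widetilde F:=-(g+isf-\div a\nabla f),\qquad \|\widetilde F\|_{H^{-1}(\Omega)}\lesssim\langle s\rangle\|F\|_{\mathcal{H}}.
\]
The imaginary part of the pairing against $\bar u$ yields the damping estimate $\int_\Omega a|\nabla u|^2\lesssim\|F\|_{\mathcal{H}}\|u\|_{H^1}$, which by~\eqref{carre} controls $\|\nabla u\|^2_{L^2(\omega)}$. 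The term $is\,\div a\nabla u$ cannot be thrown onto the right-hand side --- its $H^{-1}$ norm is only $O(\langle s\rangle)\|u\|_{H^1}$, too large to absorb --- so, following~\cite{BuHi05}, one keeps it and uses that it is microlocally carried by the region where $a$ is bounded below, i.e. precisely where the damping estimate already provides control, so that it is reabsorbed at the cost of a fixed power of $\langle s\rangle$ (this is where the merely $L^\infty$ nature of $a$ costs, exactly as in Theorem~\ref{th.0}). This reduces everything to a stationary observability estimate for the \emph{free} operator on the cube: for $w\in H^1_0(\Omega)$,
\[
\|w\|^2_{H^1(\Omega)}\lesssim\langle s\rangle^{2}\Bigl(\|(-\Delta-s^2)w\|^2_{H^{-1}(\Omega)}+\|\nabla w\|^2_{L^2(\omega)}\Bigr),
\]
the local term being replaced by $\int_\Omega a|\nabla w|^2$ (and~\eqref{carre} by the weaker $\int_\Omega a>0$) when $d=2$. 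The additional factor $\langle s\rangle^{2}$ relative to Theorem~\ref{th.0}, which accounts for the rate $t^{-k/2}$ instead of $t^{-k}$, is precisely the price of dropping the geometric control condition.

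The only genuinely new input is this last estimate on the Dirichlet cube, which is proved by Fourier series, in the spirit of the torus estimates behind~\cite{AL14}. Since the eigenvalues $\pi^2\sum_i(n_i/a_i)^2$ cluster only for arithmetic reasons, after a dyadic frequency localisation a quasimode of $-\Delta-s^2$ is either spread out in phase space --- and then observable from any non-empty open set by a standard propagation/averaging argument --- or else concentrated near the codirections of a single coordinate axis. In the latter case a second microlocalisation in the transverse variables reduces matters to a one-dimensional averaging estimate along the corresponding invariant subtorus: when $d=2$ this subtorus is a circle, on which any damping of positive average is observable, which is why $\int_\Omega a>0$ suffices; when $d>2$ the invariant subtori may have dimension $<d$, a positive total mass of $a$ need not produce a positive mass on each of them, and the open-set hypothesis~\eqref{carre} is used instead.

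The main obstacle is exactly this Helmholtz observability on the cube without geometric control --- the two-microlocal treatment of the coordinate concentrations and the arithmetic of the spectrum --- together with the bookkeeping that interlaces it with the second-order, non-self-adjoint overdamping term so as to obtain the stated loss $\langle s\rangle^{4}$. Everything else (the Borichev--Tomilov reduction, the energy identity, the unique continuation for $i\reals^{*}\subset\rho(\mathcal{A})$) is routine.
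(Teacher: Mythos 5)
Your overall architecture is the right one (Borichev--Tomilov reduction, low-frequency invertibility by unique continuation, and a high-frequency bound $\|(\mathcal{A}-is)^{-1}\|=O(\langle s\rangle^{4})$ obtained from a Helmholtz observability estimate on the cube with a loss of one power of $\langle s\rangle$ relative to the geometric-control case; this matches Proposition~\ref{resolv.4} and Proposition~\ref{helmoltz-tore}). But there is a genuine gap at the very step you single out. You assert that the Kelvin--Voigt term $is\,\mathrm{div}\, a\nabla u$ ``cannot be thrown onto the right-hand side'' because its $H^{-1}$ norm is only $O(\langle s\rangle)\|u\|_{H^1}$, and you replace this by an unspecified microlocal ``reabsorption at the cost of a fixed power of $\langle s\rangle$''. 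This misses the central mechanism of the whole paper: the imaginary-part identity gives $\int_\Omega a|\nabla u|^2\lesssim \|F\|_{\mathcal H}\|u\|_{H^1}$, hence $\|a\nabla u\|_{L^2}^2\leq\|a\|_\infty\int_\Omega a|\nabla u|^2\lesssim\|F\|_{\mathcal H}\|u\|_{H^1}$, so the term \emph{is} quadratically small and \emph{is} put on the right-hand side as an $H^{-1}$ source; feeding it into the torus observability estimate (whose $g$-term costs $C\langle\lambda\rangle\|g\|_{H^{-1}}$) is precisely what produces the exponent $4$. By contrast, ``keeping the term on the left'' and exploiting its microlocal localisation is the \emph{non}-perturbative strategy of~\cite{BuCh, Bu19}, which requires smoothness of $a$ and is explicitly not what is done here; as written, your reabsorption claim is not justified and the argument does not close.

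The second gap is the black box itself. You propose to prove the Helmholtz observability on the cube from scratch (dyadic localisation, second microlocalisation near coordinate codirections, arithmetic of the spectrum, one-dimensional averaging on invariant subtori). That is the content of entire papers; the paper instead imports it: in dimension $2$ from the uniform resolvent bound $\|(\Delta+ia-z)^{-1}\|_{\mathcal L(L^2)}\leq C$ of~\cite{BuZw18} (which is exactly what allows the weak hypothesis $\int_\Omega a>0$), and in higher dimensions from~\cite{Ko97, AnMa14} (whence the open-set hypothesis~\eqref{carre}), followed by a reflection to pass from the Dirichlet cube to the torus and the algebraic trick $(\Delta+\lambda^2)u=f+g\Leftrightarrow(\Delta+i1_\omega+\lambda^2)u=(f+i1_\omega u)+g$. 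Your sketch (``a standard propagation/averaging argument'', ``a one-dimensional averaging estimate'') does not constitute a proof of this input. A further mismatch: your observability inequality observes $\|\nabla w\|_{L^2(\omega)}$ (or $\int_\Omega a|\nabla w|^2$), whereas the available torus results observe $\|w\|_{L^2(\omega)}$ (or $\int_\Omega a|w|^2$); the paper bridges the two by interpolating $\|v\|_{L^2(\omega)}$ between $\|v\|_{H^{-1}(\omega)}=O(\lambda^{-1})$ (from the equation) and the damping bound on $\|\nabla v\|_{L^2(\omega)}$. A gradient-observability estimate on the torus would need a separate justification.
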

\begin{remark}
While the decay rate in Theorem~\ref{th.2} is in general optimal (i.e. we can construct examples of geometries where it is saturated, see Appendix~\ref{app.B}), the decay rates in Theorems~\ref{th.0} and~\ref{th.1} are unlikely to be optimal. They can actually be improved  for piecewise smooth damping functions~$a$ (see ~\cite{LiuRa05} in dimension $1$), and the methods developed in these works should also apply to the case of tori considered in Theorem~\ref{th.1} (see~\cite{BuHi05})
\end{remark}
\begin{remark}\label{rem.fredholm}
Throughout this note, we shall prove that some operators of the type $P- \lambda \text{Id}$, $\lambda \in \mathbb{R}$ (resp.  $\lambda \in i \mathbb{R}$) are invertible with estimates on the inverse. All these operators share the feature that they have compact resolvent, i.e. $\exists z_0 \in \mathbb{C}; (P- i)^{-1}$  exists and is compact. 
As a consequence, since 
$$ (P- \lambda) = (P-z_0) \Big( \text{Id} + (P-z_0)^{-1} (z_0- \lambda) \Big), $$
and  $( \text{Id} + (P-z_0)^{-1} (z_0- \lambda)^{-1})$ is Fredholm with index $0$,  to show that $(P- \lambda) $ is invertible with inverse bounded in norm by $A$ , it is enough to bound the solutions of $(P- \lambda) u =f$ and prove 
$$ (P- \lambda) u =f\Rightarrow \| u \| \leq M \| f\| .$$
\end{remark}
\begin{remark} An elementary reflection principle shows that considering periodic boundary conditions on the torus $\prod_{i} (0, 2 a_i)$  contains as a special case the case of the  cube $\prod_{i} (0,  a_i)$ with Dirichlet or  Neumann boundary conditions. We  restricted the analysis to the case of Dirichlet boundary conditions to avoid technicalities due to the $0$ frequency. The general case of periodic boundary conditions (and hence also of Neumann boundary conditions)  could be dealt with following the method in~\cite[Appendix B]{BuGe18}
\end{remark}
\begin{remark} Except for the results on tori (Theorem~\ref{th.1}) which are specific to the choice of the flat metric for the Laplace operator, and the last uniqueness result in the proof of Proposition~\ref{resolv.1} which uses the analyticity of the metric (and is only used in Theorem~\ref{th.0} in space dimension $2$), all the results in this paper are also true for non constant coefficients (with the same proof). Actually, the only ingredient we use is basically that the operators $\Delta$ and $\text{div} a \nabla$ are symetric for the {\em same} integration measure. We could e.g. replace $\Delta $ by $\frac 1 {\kappa (x) } \text{div} g^{i,j} (x) \nabla_x $ and $\text{div} a \nabla$  by $\frac 1 {\kappa (x) }  \text{div} a^{i,j} \nabla$ with $\kappa \geq c>0$, $(g^{i,j})$ positive definite and $(a^{i,j})$ non negative. Dropping the analyticity of the metrics $g$ in this case would require reinforcing the non triviality assumption in Proposition~\ref{resolv.1} to the stronger~\eqref{carre} (which is in any case required for Theorems~\ref{th.2} and~\ref{th.1}).
\end{remark}
It is well known that decay estimates for the evolution semi-group follow from resolvent estimates~\cite{Bu98, RTT05, BoTo10, BaDu08, TuWe09}. The plan of the paper is the following. In Section~\ref{sec.2} we show that as soon as the damping term $a\in L^\infty$ is not equal to $0$ almost everywhere, these resolvent estimates hold true in the low (bounded) frequency regime. Then in all cases the strategy is the same: we work on the resolvent and first prove an {\em a priori} estimates on the support of $a$ which allows to put in the r.h.s. the (non perturbative) term coming from Kelvin-Voigt damping and treat it {\em as a perturbative source term} for the Helmoltz equation. Then we apply {\em ad nauseam} variations around quite standard resolvent estimates for the Helmoltz equation that we use as a {\em black box} and which depend on the geometry of the problem studied. In Section~\ref{sec.3}, to prove the high frequency resolvent estimates for Theorem~\ref{th.2}, the standard Carleman estimates are used. In Section~\ref{sec.4}, to prove Theorem~\ref{th.0}, we apply  the geometric control estimates. Finally in Section~\ref{sec.5},  the torus resolvent estimates from~\cite{AnMa14,   BuZw12} are involved.
For completeness in Appendix~\ref{app.A} we give a proof of the classical geometric control resolvent estimates which is at the heart of the proof of Theorem~\ref{th.0}, and in Appendix~\ref{app.B} we give an example where the decay rate in Theorem~\ref{th.2} is optimal. See also~\cite{Le96}

{\bf Acknowledgments:} The author is supported by Agence nationale de la recherche grant  "ISDEEC'' ANR-16-CE40-0013. I would also like to thank referees whose comments improved the article.
 \section{Low frequencies}	\label{sec.2}
The purpose of this section is to prove that for low frequencies $\lambda$ the resolvent of the operator $\mathcal{A}$ is bounded.
\begin{prop} \label{resolv.1}Assume that $a\in L^\infty$ is non negative  $a\geq 0$ and non trivial $\int_\Omega a(x) dx >0$).
Then for any $M>0$, there exists $C>0$ such that for all $\lambda \in \mathbb{R}, |\lambda | \leq M$, the operator $\mathcal{A} - i \lambda$ is invertible from $D( \mathcal{A})$ to $\mathcal{H}$ with estimate
\begin{equation}\label{resolv}
\| ( \mathcal{A} - i \lambda)^{-1} \|_{\mathcal{L}( \mathcal{H})} \leq C .
\end{equation}
\end{prop}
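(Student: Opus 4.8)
The plan is to pass from the first-order system to a scalar elliptic problem, settle invertibility there by Fredholm theory together with a unique continuation argument, and then obtain the uniform-in-$\lambda$ bound by a compactness--contradiction scheme.

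\textbf{Reduction to a scalar equation.} Equip $\mathcal{H}=H^1_0(\Omega)\times L^2(\Omega)$ with the energy inner product. Writing $(\mathcal{A}-i\lambda)(u_0,u_1)=(f_0,f_1)$ line by line, the first line gives $u_1=i\lambda u_0+f_0$, and substituting this into the second line leaves the scalar equation
\[
L_\lambda u_0:=\mathrm{div}\big((1+i\lambda a)\nabla u_0\big)+\lambda^2 u_0 = f_1+i\lambda f_0-\mathrm{div}\,a\nabla f_0=:g\in H^{-1}(\Omega),
\]
with $\|g\|_{H^{-1}}\lesssim_M\|(f_0,f_1)\|_{\mathcal{H}}$; and once $u_0\in H^1_0$ solves $L_\lambda u_0=g$, the pair $(u_0,u_1)$ automatically lies in $D(\mathcal{A})$, since $\Delta u_0+\mathrm{div}\,a\nabla u_1=f_1+i\lambda u_1\in L^2$. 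Hence it suffices to invert $L_\lambda:H^1_0\to H^{-1}$ with a bound $\|u_0\|_{H^1_0}\lesssim_M\|g\|_{H^{-1}}$ uniform for $|\lambda|\le M$, and then to recover $u_1=i\lambda u_0+f_0$ in $L^2$.

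\textbf{Invertibility.} Because $\lambda$ is real, $\mathrm{Re}(1+i\lambda a)\equiv 1$, so the sesquilinear form $(u,v)\mapsto\langle(1+i\lambda a)\nabla u,\nabla v\rangle$ is bounded and coercive on $H^1_0$ (Poincaré), whence $-\mathrm{div}((1+i\lambda a)\nabla\,\cdot\,):H^1_0\to H^{-1}$ is an isomorphism by Lax--Milgram; the term $\lambda^2 u_0$ is a relatively compact perturbation, as the inclusion $H^1_0\hookrightarrow H^{-1}$ factors through the compact embedding $H^1_0\hookrightarrow L^2$, so $L_\lambda$ is Fredholm of index $0$. As in Remark~\ref{rem.fredholm}, it then remains to check injectivity. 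If $L_\lambda u=0$, pairing with $u$ and taking imaginary parts gives $\lambda\int_\Omega a|\nabla u|^2=0$; for $\lambda\ne 0$ this forces $a\nabla u\equiv 0$, hence $\mathrm{div}\,a\nabla u=0$, so $u\in H^1_0$ solves $\Delta u+\lambda^2 u=0$ with $\nabla u$ vanishing a.e.\ on $\{a>0\}$, a set of positive measure. Since the flat metric is analytic, $u$ is real-analytic in the connected open set $\Omega$, and a real-analytic function whose gradient vanishes on a set of positive measure is constant; the Dirichlet condition then gives $u\equiv 0$. For $\lambda=0$ one simply has $\Delta u=0$, $u\in H^1_0$, so $u=0$. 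Thus $L_\lambda$, and therefore $\mathcal{A}-i\lambda$, is invertible for every real $\lambda$.

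\textbf{Uniform bound.} Suppose $\sup_{|\lambda|\le M}\|(\mathcal{A}-i\lambda)^{-1}\|_{\mathcal{L}(\mathcal{H})}=\infty$. Then there are $\lambda_n\in[-M,M]$ and $v_n=(u_0^n,u_1^n)\in D(\mathcal{A})$ with $\|v_n\|_{\mathcal{H}}=1$ and $g_n:=(\mathcal{A}-i\lambda_n)v_n=(g_0^n,g_1^n)\to 0$ in $\mathcal{H}$. Testing against $v_n$ and using the dissipation identity (the infinitesimal form of the energy balance in the introduction) $\mathrm{Re}\langle\mathcal{A}v_n,v_n\rangle_{\mathcal{H}}=-\int_\Omega a|\nabla u_1^n|^2$, together with $\mathrm{Re}\langle-i\lambda_n v_n,v_n\rangle_{\mathcal{H}}=0$, gives $\int_\Omega a|\nabla u_1^n|^2=-\mathrm{Re}\langle g_n,v_n\rangle_{\mathcal{H}}\to 0$. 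Since $u_0^n$ is bounded in $H^1_0$, so is $u_1^n=i\lambda_n u_0^n+g_0^n$; extract weak limits $u_0^n\rightharpoonup u_0$, $u_1^n\rightharpoonup u_1$ in $H^1_0$, with strong $L^2$ convergence (Rellich) and $\lambda_n\to\lambda_\infty\in[-M,M]$. Passing to the limit in the equation — using $a\nabla u_1^n\rightharpoonup a\nabla u_1$ in $L^2$ since $a\in L^\infty$ — shows $(u_0,u_1)\in D(\mathcal{A})$ and $(\mathcal{A}-i\lambda_\infty)(u_0,u_1)=0$, hence $(u_0,u_1)=0$ by the previous step. On the other hand, expanding $\|\nabla(u_0^n-u_0)\|^2$, integrating by parts, and substituting $-\Delta u_0^n=-g_1^n+\mathrm{div}\,a\nabla u_1^n-i\lambda_n u_1^n$, each resulting term tends to $0$; the only nonobvious one, $\langle a\nabla u_1^n,\nabla(u_0^n-u_0)\rangle=\langle\sqrt a\,\nabla u_1^n,\sqrt a\,\nabla(u_0^n-u_0)\rangle$, vanishes because $\|\sqrt a\,\nabla u_1^n\|_{L^2}\to 0$ while $\sqrt a\,\nabla(u_0^n-u_0)$ stays bounded. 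Hence $u_0^n\to u_0$ strongly in $H^1_0$, so $v_n\to(u_0,u_1)$ in $\mathcal{H}$ and $\|(u_0,u_1)\|_{\mathcal{H}}=1$, contradicting $(u_0,u_1)=0$. Together with the invertibility this proves~\eqref{resolv}.

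\textbf{Main obstacle.} The delicate point is the injectivity step: with $a$ merely in $L^\infty$ and only $\int_\Omega a>0$ assumed, the gradient of a putative eigenfunction vanishes only on a set of positive measure, not on an open set, so no Carleman or unique continuation estimate applies directly and one genuinely uses analytic hypoellipticity of the Laplacian for the flat metric (under the stronger hypothesis~\eqref{carre}, where $\{a>\delta\}$ contains an open set, classical unique continuation would suffice). A secondary technicality is the upgrade to strong $H^1_0$ convergence in the last step, which relies on the extra dissipation information $\int_\Omega a|\nabla u_1^n|^2\to 0$.
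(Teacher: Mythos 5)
Your proof is correct and follows essentially the same route as the paper: reduction to a scalar second-order equation, Lax--Milgram plus a compact perturbation to get Fredholm of index $0$, injectivity via the imaginary-part identity $\lambda\int_\Omega a|\nabla u|^2=0$ combined with analyticity of solutions of $\Delta u+\lambda^2u=0$, and a compactness--contradiction argument for the uniform bound. The only differences are cosmetic (you work with $u_0$ rather than $v=f+i\lambda u$, prove injectivity separately instead of folding it into the contradiction argument, and obtain the strong $H^1_0$ convergence by expanding $\|\nabla(u_0^n-u_0)\|^2$ rather than via convergence of norms).
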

We start with $\lambda=0$ where
\begin{equation}\label{frequzero} 
\begin{aligned} \mathcal{A} \begin{pmatrix} u\\v\end{pmatrix} = \begin{pmatrix}f\\ g\end{pmatrix} &\Leftrightarrow \left\{ \begin{aligned} & v = f ,\\
&\Delta u + \text{div} a \nabla_x v= g, \end{aligned}\right.\\
&  \Leftrightarrow \left\{ \begin{aligned} & v = f,  \\
&\Delta u=- \text{div} a \nabla_x f + g,  \end{aligned} \right.
\end{aligned}
\end{equation}
and the result follows from the inversion of the Laplace operator with Dirichlet boundary conditions from $H^{-1}$ to $H^1$. 
Let us now study the case $\lambda \neq 0$. 

\begin{equation}\label{above} \begin{aligned}(\mathcal{A} - i \lambda)\begin{pmatrix} u\\v\end{pmatrix} = \begin{pmatrix}f\\ g\end{pmatrix} &\Leftrightarrow \left\{ \begin{aligned} &-i \lambda u + v = f \\
&\Delta u + \text{div} a \nabla_x v- i \lambda v= g \end{aligned}\right.\\
 &\Leftrightarrow \left\{ \begin{aligned} & v = f +i \lambda u \\
&-i \lambda^{-1}\Delta v + \text{div} a \nabla_x v -i \lambda v= g +i \lambda^{-1} \Delta f \end{aligned} \right.
\end{aligned}
\end{equation}
Following~\cite{AmHaRo18}, we remark that the operator $A= i \lambda^{-1}\Delta  + \text{div} a \nabla_x$ is by Lax Milgram Theorem invertible from $ H^1_0$ to $H^{-1}$ hence we can write 
$$(-i \lambda^{-1}\Delta  + \text{div} a \nabla_x  -i \lambda ) = ( \text{ Id} - i \lambda A^{-1}) A$$
and consequently  the operator 
$$P=-i \lambda^{-1}\Delta  + \text{div} a \nabla_x  -i \lambda $$
is invertible from $H^1_0$ to $H^{-1}$ iff the operator  $( \text{ Id} - i \lambda A^{-1}) $ is invertible on $H^{-1}$. But $A^{-1}$ is continuous from $H^{-1}$ to $H^1_0$ hence compact on $H^{1} $ and $( \text{ Id} - i \lambda A^{-1}) $ is Fredholm of index $0$ hence invertible iff it is injective. This shows that $P$ is invertible from $H^1_0$ to $H^{-1}$ iff it is injective.

Coming back to~\eqref{above} we get easily that $(\mathcal{A} - i \lambda)$ is invertible from $H^1_0 \times L^2 $ to $D( \mathcal{A})$ iff it is injective, and consequently to invert the operator $\mathcal{A} - i \lambda$, and estimate the norm of the inverse, it is enough to estimate solutions to the system~\eqref{above}.

We now show that for any $M$, ~\eqref{resolv} holds uniformly for $\lambda\in [-M, M]$. Otherwise, there would exist sequences $(U_n) = (u_{n}, v_n) $ and $F_n = (f_n, g_n) $ in $ H^1_0 \times L^2$, $\lambda_n \in [-M, M]$ such that 
$$ (\mathcal{A} - i \lambda_n ) U_n = F_n, \| U_n \|_{H^1\times L^2} =1, \| F_n \|_{H^1\times L^2} \rightarrow_{n\rightarrow + \infty} 0.$$
Extracting a subsequence, we can assume that $U_n$ converges weakly to $U= (u,v)$ in $H^1\times L^2$ and that $\lambda_n \rightarrow \lambda \in [-M. M]$. 
Passing to the limit in~\eqref{above} (for $U_n, F_n, \lambda_n$) we get 
\begin{equation}\label{above2}
 v= i \lambda u, \qquad \Delta u +i \lambda \text{div} a \nabla_x u +\lambda^2 u=0 \in H^{-1}.
\end{equation}
Multiplying the  last line in~\eqref{above}  (for $U_n, F_n, \lambda_n)$) by $\overline{u_n}$ integrating by parts and taking the imaginary part gives
\begin{multline}\label{apriori3}
|\lambda_n | \int _{\Omega} a(x) |\nabla_x u_n|^2 dx= \pm \Im \bigl( g_n - \text{div} a \nabla_x f_n + i \lambda f_n , u_n\bigr)_{L^2} \\
\leq   \| g_n + \text{div} a \nabla_x f _n+ i \lambda f_n\|_{H^{-1}} \| u_n\|_{H^1_0}   \rightarrow_{n\rightarrow + \infty}0, 
\end{multline}
which implies 
$$|\lambda | \int _{\Omega} a(x) |\nabla_x u|^2 dx= 0.$$
 Multiplying the  last line in~\eqref{above} (for $U_n, F_n, \lambda_n)$), by $\overline{u_n}$ integrating by parts and taking the real part gives
\begin{equation}\label{apriori4}
 \int _{\Omega}  \lambda_n^2 |u_n|^2- |\nabla_x u_n|^2 dx= - \Re \bigl( g_n - \text{div} a \nabla_x f_n + i \lambda f_n , u_n\bigr)_{L^2}   \rightarrow_{n\rightarrow + \infty}0
\end{equation}
Since $u_n$ converges to $u$ strongly in $L^2$, we deduce that $v_n = i\lambda_n u_n$ converges also strongly to $v= i\lambda u$ in $L^2$ and from~\eqref{above2}and ~\eqref{apriori4}
$$ \| \nabla_xu\|_{L^2}^2 = \lambda^2 \| u\|_{L^2}^2= \lim_{n\rightarrow + \infty} \lambda_n^2 \| u_n\|_{L^2}^2=\lim_{n\rightarrow + \infty} \|\nabla_x u_n\|_{L^2}^2,$$
and consequently $u_n$ converges strongly to $u$ in $H^1$. 

Finally, if $\lambda \neq 0$, using ~\eqref{apriori3} we get that $\Delta u + \lambda^2 u =0$,   in $\Omega$ and $a \nabla_x u$ vanishes. As a consequence, if ~\eqref{carre} is assumed, the function $u$ is constant on $\omega$ hence (using the equation $(\Delta + \lambda^2 ) u =0$), $u$ vanishes on $\omega$ and using the uniqueness result for solutions to second order elliptic PDE's $u$ is identically $0$. If the weaker $\int_\Omega  a dx >0$ is assumed, we use in that case that $u$ is analytic in $\Omega$. Then since $\nabla_x u$ vanishes on the support of $a$ which has non zero measure, we deduce from analyticity that $\nabla_xu$ is identically $0$ and $u$ is constant everywhere,  and $v= \lambda u= - \Delta u$ is also equal to $0$.   On the other hand, if $\lambda =0$, then $v= \lambda u =0$, $\Delta u=0, u \in H^1_0$ hence $u=0$. In both cases this contradicts $\| U\|_{H^1\times L^2} = \lim_{n\rightarrow + \infty } \| U_n \|_{H^1\times L^2} =1$.

\section{High energy estimates for general dampings}\label{sec.3}
Theorem~\ref{th.1} follows (see~\cite[Théorème 3]{Bu98}) from  the low frequency resolvent estimate Proposition~\ref{resolv.1} and  the following high energy resolvent estimate

\begin{prop}\label{resolv.2} 
Assume that $a\in L^\infty$ satisfies $a\geq c>0$ on an open set $\omega$.
Then  there exists $M, C, c>0$ such that for all $\lambda \in \mathbb{R}, |\lambda | \geq M$, the operator $\mathcal{A} - i \lambda$ is invertible from $D( \mathcal{A})$ to $\mathcal{H}$ with estimate
\begin{equation}\label{resolvbis}
\| ( \mathcal{A} - i \lambda)^{-1} \|_{\mathcal{L}( \mathcal{H})} \leq C e^{c |\lambda|} .
\end{equation}
\end{prop}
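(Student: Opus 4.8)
The plan is to reduce~\eqref{resolvbis} to an a priori estimate for a scalar Helmholtz‑type equation and then run the \emph{black box} argument. Arguing exactly as in the proof of Proposition~\ref{resolv.1}, and using Remark~\ref{rem.fredholm} for the corresponding scalar operator, it is enough to bound the solutions $(u,v)$ of the system~\eqref{above}: its first line gives $v=f+i\lambda u$, and substituting into the second line yields
\begin{equation}\label{scalarKV}
\Delta u+\lambda^2 u+i\lambda\,\mathrm{div}(a\nabla u)=\tilde g:=g+i\lambda f-\mathrm{div}(a\nabla f)\ \text{ in }\Omega,\qquad u|_{\partial\Omega}=0,
\end{equation}
with $\|\tilde g\|_{H^{-1}(\Omega)}\lesssim(1+|\lambda|)\|(f,g)\|_{\mathcal H}$ and $\|v\|_{L^2}\le\|f\|_{L^2}+|\lambda|\,\|u\|_{L^2}$. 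Hence the whole statement reduces to proving $\|u\|_{H^1_0(\Omega)}\le Ce^{c|\lambda|}\|\tilde g\|_{H^{-1}(\Omega)}$ for all solutions of~\eqref{scalarKV} with $|\lambda|\ge M$, polynomial powers of $\lambda$ being harmless against the exponential.

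First I would extract the a priori estimate on $\omega$ that underlies the whole scheme: pairing~\eqref{scalarKV} with $u$ in the $H^{-1}$--$H^1_0$ duality and taking imaginary parts kills the (self‑adjoint) Laplace and $\lambda^2$ terms and leaves
$$|\lambda|\int_\Omega a(x)|\nabla u|^2\,dx=\bigl|\Im\langle\tilde g,u\rangle\bigr|\le\|\tilde g\|_{H^{-1}}\,\|u\|_{H^1_0}.$$
Since $a\ge c>0$ on $\omega$ this gives $\|\nabla u\|_{L^2(\omega)}^2\lesssim|\lambda|^{-1}\|\tilde g\|_{H^{-1}}\|u\|_{H^1_0}$, and, because $\|a\nabla u\|_{L^2(\Omega)}\le\|a\|_\infty^{1/2}\bigl(\int a|\nabla u|^2\bigr)^{1/2}$, rewriting~\eqref{scalarKV} as $(\Delta+\lambda^2)u=h$ with $h:=\tilde g-i\lambda\,\mathrm{div}(a\nabla u)$ it also controls the Kelvin--Voigt term as a source term:
$$\|h\|_{H^{-1}}\lesssim\|\tilde g\|_{H^{-1}}+|\lambda|\,\|a\nabla u\|_{L^2}\lesssim\|\tilde g\|_{H^{-1}}+|\lambda|^{1/2}\|\tilde g\|_{H^{-1}}^{1/2}\|u\|_{H^1_0}^{1/2}.$$

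To feed an observation term into a Carleman estimate whose observation is an $L^2$ (not a gradient) norm, I would next fix a nonempty open $\omega_1\Subset\omega$ and a cut‑off $\chi\in C_c^\infty(\omega)$ with $0\le\chi\le1$ and $\chi\equiv1$ on $\omega_1$, test $(\Delta+\lambda^2)u=h$ against $\chi^2 u$ (with conjugation), integrate by parts and use $|\lambda|\ge M$ to keep $\lambda^2\int\chi^2|u|^2$ on the left‑hand side; the remaining terms are controlled by $\|h\|_{H^{-1}}$, by $\|\nabla u\|_{L^2(\omega)}$ (both already estimated) and by a power of $\|u\|_{H^1_0}$ strictly less than one, yielding
$$\|u\|_{L^2(\omega_1)}\lesssim|\lambda|^{-1}\bigl(\|\tilde g\|_{H^{-1}}\|u\|_{H^1_0}\bigr)^{1/2}+|\lambda|^{-3/4}\|\tilde g\|_{H^{-1}}^{1/4}\|u\|_{H^1_0}^{3/4}.$$

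Finally I would invoke the \emph{black box}: the standard Carleman estimate for the Helmholtz operator on $\Omega$ with an interior observation region, in the form that there exist $C,c>0$ with
$$\|w\|_{H^1_0(\Omega)}\le Ce^{c|\lambda|}\bigl(\|(\Delta+\lambda^2)w\|_{H^{-1}(\Omega)}+\|w\|_{L^2(\omega_1)}\bigr),\qquad w\in H^1_0(\Omega),\ \lambda\in\reals .$$
Applying this with $w=u$, inserting the bounds above for $\|h\|_{H^{-1}}$ and $\|u\|_{L^2(\omega_1)}$, and using Young's inequality to split every term carrying a power of $\|u\|_{H^1_0}$ below one into $\tfrac12\|u\|_{H^1_0}$ plus a $\lambda$‑polynomial multiple of $e^{c'|\lambda|}\|\tilde g\|_{H^{-1}}$, the half of $\|u\|_{H^1_0}$ is absorbed on the left and one gets $\|u\|_{H^1_0(\Omega)}\le Ce^{c|\lambda|}\|\tilde g\|_{H^{-1}(\Omega)}$; with the first paragraph this is~\eqref{resolvbis}, invertibility being granted by the Fredholm alternative of Remark~\ref{rem.fredholm} once injectivity is read off from this estimate. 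The step I expect to be the real obstacle is the handling of the Kelvin--Voigt term (second and third paragraphs above): unlike the classical damping $a\partial_t u$, the term $i\lambda\,\mathrm{div}(a\nabla\cdot)$ is \emph{not} a lower‑order perturbation of $\Delta+\lambda^2$ — it is of the same order as $\Delta$ — so it cannot simply be thrown into the right‑hand side of a Carleman estimate, and the argument hinges on the quantitative smallness of $a^{1/2}\nabla u$ furnished by the energy identity, carried to all of $\Omega$ by the Carleman estimate with just enough margin in the exponential to swallow the polynomial losses.
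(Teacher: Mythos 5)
Your architecture is exactly the paper's: reduce to the scalar equation obtained from \eqref{above}; take the imaginary part of the pairing with $\bar u$ to get $|\lambda|\int_\Omega a|\nabla u|^2\lesssim\|\tilde g\|_{H^{-1}}\|u\|_{H^1_0}$; use this both to control the Kelvin--Voigt term $i\lambda\,\mathrm{div}(a\nabla u)$ as an $H^{-1}$ source of size $O\bigl(|\lambda|^{1/2}\|\tilde g\|_{H^{-1}}^{1/2}\|u\|_{H^1_0}^{1/2}\bigr)$ and (after testing against a cut-off supported where $a\geq c$) to produce an $L^2$ observation term; then feed everything into a Carleman estimate and absorb by Young's inequality, the exponential swallowing all polynomial losses. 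Your intermediate exponents check out, the powers of $\|u\|_{H^1_0}$ on the right are all strictly below one, and the Fredholm reduction to an a priori bound is the same as the paper's Remark~\ref{rem.fredholm}.

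The one substantive gap is the black box itself. You cite as ``standard'' the estimate $\|w\|_{H^1_0}\leq Ce^{c|\lambda|}\bigl(\|(\Delta+\lambda^2)w\|_{H^{-1}}+\|w\|_{L^2(\omega_1)}\bigr)$, but what is genuinely classical (Lebeau--Robbiano, Burq) is the weaker form with the source measured in $L^2$ and the observation in $H^1$. Your argument truly needs the $H^{-1}$ source norm --- $\mathrm{div}(a\nabla u)$ is only in $H^{-1}$ for $a\in L^\infty$ --- and the upgrade is not a formality: one cannot simply dualize an inequality that carries an observation term on its right-hand side. The paper's Proposition~\ref{carle} is devoted precisely to this upgrade: a commutator/integration-by-parts argument first downgrades the observation from $\|\tilde b u\|_{H^1}$ to $\|bu\|_{L^2}$, and then an artificial complex absorption $\pm ib^2$ converts the estimate into an honest invertibility statement for $\Delta+\tau^2\pm ib^2$ from $L^2$ to $H^1_0$, to which duality \emph{can} be applied to reach $H^{-1}\to L^2$ (and then $H^{-1}\to H^1_0$ via the equation); undoing the absorption yields the form you quoted. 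Supply that derivation, or a reference to the estimate in exactly this form, and your proof is complete and coincides with the paper's.
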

We shall deduce this estimate from the following refinement of Carleman estimates for solutions to Helmoltz equations 
\begin{prop} \label{carle}
Assume that $b \in C^\infty_0 ( \Omega)$ is not identically $0$. Then there exists $C,c>0$ such that for any $(u,f) \in H^1_0\times H^{-1} ( \Omega)$, for any $\tau \in \mathbb{R}$,
$$ (\Delta + \tau^2)u =f \Rightarrow  \| u\| _{H^1} \leq C e^{c|\tau|} \bigl( \| f\|_{H^{-1} } + \| bu\|_{L^2} \bigr).
$$
\end{prop}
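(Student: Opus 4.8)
The statement is a quantitative unique continuation estimate for the Helmholtz operator $\Delta+\tau^2$, and the plan is to reduce it, by an elliptic extension in one extra variable, to a $\tau$-independent Carleman--type interpolation inequality for the Laplacian, so that the exponential loss $e^{c|\tau|}$ is produced transparently at the extension step rather than having to be squeezed out of a Carleman estimate for $\Delta+\tau^2$ itself. Since $b\not\equiv 0$ I would fix $x_0\in\Omega$ and $r>0$ with $\omega_0:=B(x_0,2r)\Subset\Omega$ and $b\ge c_0>0$ on $\omega_0$. Given $(u,f)\in H^1_0\times H^{-1}(\Omega)$ with $(\Delta+\tau^2)u=f$ and $u\mid_{\partial\Omega}=0$, set
$$ w(x,y):=\cosh(\tau y)\,u(x),\qquad (x,y)\in\widetilde\Omega:=\Omega\times(-1,1).$$
Since $\partial_y^2\cosh(\tau y)=\tau^2\cosh(\tau y)$, one checks at once that $w$ solves the genuinely elliptic, $\tau$-independent equation $(\Delta_x+\partial_y^2)w=\cosh(\tau y)f(x)=:\widetilde f$ in $\widetilde\Omega$, with $w\mid_{\partial\Omega\times(-1,1)}=0$; as multiplication by the bounded smooth function $\cosh(\tau y)$ is bounded on $H^{-1}$ in $x$, one has $\|\widetilde f\|_{H^{-1}(\widetilde\Omega)}\le Ce^{|\tau|}\|f\|_{H^{-1}(\Omega)}$, and since $\cosh\ge 1$, Fubini gives the elementary bounds $\|u\|_{H^1(\Omega)}\le\|w\|_{H^1(\Omega\times(-1/2,1/2))}$, $\|bw\|_{L^2(\widetilde\Omega)}\le Ce^{|\tau|}\|bu\|_{L^2(\Omega)}$ and $\|w\|_{H^1(\widetilde\Omega)}\le Ce^{|\tau|}\|u\|_{H^1(\Omega)}$.

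Next I would invoke the standard Lebeau--Robbiano interpolation inequality for the fixed operator $\Delta_x+\partial_y^2$ on $\widetilde\Omega$, with Dirichlet data on the lateral boundary $\partial\Omega\times(-1,1)$ and an $H^{-1}$ right-hand side: there are $C>0$ and $\theta\in(0,1)$ depending only on $\Omega,x_0,r$ --- crucially \emph{not} on $\tau$ --- such that
$$ \|w\|_{H^1(\Omega\times(-1/2,1/2))}\le C\bigl(\|\widetilde f\|_{H^{-1}(\widetilde\Omega)}+\|w\|_{L^2(\omega_0\times(-1/2,1/2))}\bigr)^{\theta}\,\|w\|_{H^1(\widetilde\Omega)}^{1-\theta}.$$
Its proof is the classical combination of an interior Carleman estimate for $\Delta_{x,y}$ with a H\"ormander weight $e^{\lambda\psi}$, $\nabla\psi\neq 0$ (an $H^{-1}$ source being handled in the usual way via the gradient term of the Carleman estimate), propagated along a finite chain of balls joining $\omega_0\times(-\tfrac12,\tfrac12)$ to $\Omega\times(-\tfrac12,\tfrac12)$, together with a boundary Carleman estimate near $\partial\Omega\times(-1,1)$ in which the Dirichlet condition, paired with a weight whose normal derivative has the favourable sign, makes the boundary term harmless; this last point is where the smoothness of $\partial\Omega$ enters. (Alternatively one could work directly with a Carleman estimate up to the Dirichlet boundary for $\Delta+\tau^2$ on $\Omega$, but the extension makes the bookkeeping of the exponent cleaner.)

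To finish: on $\omega_0\times(-\tfrac12,\tfrac12)$ one has $b\ge c_0$, hence $\|w\|_{L^2(\omega_0\times(-1/2,1/2))}\le c_0^{-1}\|bw\|_{L^2(\widetilde\Omega)}$; plugging the four elementary bounds of the first paragraph into the interpolation inequality and writing $X:=\|f\|_{H^{-1}(\Omega)}+\|bu\|_{L^2(\Omega)}$, $Y:=\|u\|_{H^1(\Omega)}<\infty$, one gets $Y\le Ce^{|\tau|}X^{\theta}Y^{1-\theta}$, whence $Y^{\theta}\le Ce^{|\tau|}X^{\theta}$ and $\|u\|_{H^1(\Omega)}\le (Ce^{|\tau|})^{1/\theta}X$, which is the claim with $c=1/\theta$. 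The only non-elementary ingredient is the $\tau$-free interpolation inequality, which is classical; the real content is the extension device, which is exactly what turns an interior elliptic estimate into one with the sharp $e^{c|\tau|}$ loss --- the algebraic counterpart of the whispering-gallery concentration that makes such a loss unavoidable. The main thing to be careful about, besides quoting the interpolation inequality in the right form (up to the Dirichlet boundary, $H^{-1}$ source), is the bookkeeping of the factors $\cosh(\tau y)$, which is routine.
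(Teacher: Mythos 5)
Your argument is correct, but it follows a genuinely different route from the paper. The paper treats the classical Carleman observability estimate for $\Delta+\tau^2$ (with an $L^2$ source and an $H^1$ observation term, i.e.\ \eqref{estim}) as a black box, and then performs two elementary upgrades: a commutator/integration-by-parts argument to weaken the observation from $\|\widetilde b u\|_{H^1}$ to $\|bu\|_{L^2}$, and a duality argument through the artificially damped operator $\Delta+\tau^2\pm ib^2$ (invert it $L^2\to H^1_0$, dualize to $H^{-1}\to L^2$, then use the equation) to weaken the source from $L^2$ to $H^{-1}$. You instead go back upstream: your $\cosh(\tau y)$ extension to $\Omega\times(-1,1)$ is precisely the Lebeau--Robbiano device by which the black-boxed estimate is proved in the first place, and you arrange the interpolation inequality so that it directly delivers an $H^{-1}$ source and an $L^2$ observation. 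Both work; what the paper's route buys is that the only non-elementary input is the most standard, off-the-shelf version of the Carleman/observability estimate, whereas your route requires quoting the interpolation inequality in a slightly less standard form (up to the Dirichlet boundary \emph{and} with an $H^{-1}$ right-hand side). You correctly indicate how the $H^{-1}$ source is absorbed via the gradient term of the Carleman estimate, but be aware that a nontrivial amount of work is hidden in that citation, and that the corners of the cylinder $\Omega\times(-1,1)$ must be kept away from the region where the Carleman weights act (harmless here since you only estimate on $|y|\le 1/2$). Two cosmetic points: you should write $|b|\ge c_0$ on $\omega_0$ rather than $b\ge c_0$, since $b$ is not assumed nonnegative; and the multiplier bound for $\cosh(\tau y)$ on $H^{-1}$ carries an extra factor $(1+|\tau|)$ from $\partial_y\cosh(\tau y)$, which is of course absorbed into $e^{c|\tau|}$. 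Your final division step $Y^\theta\le Ce^{C|\tau|}X^\theta$ is legitimate because $Y=\|u\|_{H^1}<\infty$.
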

\begin{proof}
Let $\widetilde{b} \in C^\infty_0 ( \Omega)$ non trivial and such that $|b|\geq c>0$ on the support of $\widetilde{b}$. We first notice that classical Carleman observation estimate (see~\cite{LeRo95, Bu98}) imply 
 \begin{equation}\label{estim}
  (\Delta + \tau^2)u =f \Rightarrow  \| u\| _{H^1} \leq C e^{c|\tau|} \bigl( \| f\|_{L^2 } + \| \widetilde{b}u\|_{H^1} \bigr).
\end{equation}
In~\eqref{estim} we can now  replace the r.h.s. by 
\begin{equation}\label{carleman}
C e^{c|\tau|} \bigl( \| f\|_{L^2 } + \| {b}u\|_{L^2} \bigr).
\end{equation}
Indeed, we have 
\begin{multline}
( \Delta u + \tau^2 u, \widetilde{b}^2 u)_{L^2} = (f,\widetilde{b}^2 u)_{L^2} \\
\begin{aligned}
 & \Rightarrow \| \widetilde{b}\nabla_x u\|_{L^2}^2 = \tau^2 \| \widetilde{b}u\|_{L^2}^2 - (2\widetilde{b} \nabla_x \widetilde{b} \cdot \nabla_x u , u)_{L^2} - ( \widetilde{b}f, bu)\\
& \Rightarrow \| \widetilde{b}\nabla_x u\|_{L^2}^2 \leq C \bigl( \tau^2 \| \widetilde{b}u\|_{L^2}^2+ \| {b} u\|_{L^2}^2 + \| \widetilde{b}f \|_{H^{-1}}^2 \bigr) \leq C (1+ |\tau|^2) \| bu \|_{L^2}^2,
\end{aligned}
\end{multline}
since $|b| \geq c>0$ on the support of  $\nabla_x \widetilde{b} $. 
Now, we consider $(v,g)$ solutions to 
$$ (\Delta + \tau^2 \pm i{b}^2)v =g.  $$
We have 
$$ \pm \Im \bigl( (\Delta + \tau^2 \pm i{b}^2)v, v\bigr)_{L^2} = \| {b} v\|_{L^2} ^2 = \bigl(\pm g, v\bigr) \leq \| g\|_{H^{-1} } \| v\|_{H^1}, $$
and from~\eqref{carleman} we deduce 
\begin{equation}
\begin{aligned} \| v\|_{H^1} & \leq Ce^{c|\tau|} \bigl( \| \mp i{b}^2 v + g \|_{L^2} + \| {b}v\|_{L^2} \bigr),\\ &\leq Ce^{c\|\tau|} \bigl( \|  g \|_{L^2} +  \| g\|^{1/2}_{H^{-1} } \| v\|^{1/2}_{H^1} \bigr),\\
\Rightarrow  \| v\|_{H^1} \hfill & \leq Ce^{c|\tau|} \| g\|_{L^2} + \frac 1 2 \bigl( C^2 e^{2c|\tau}   \| g\|_{H^{-1} } + \| v\|_{H^1} \bigr),\\
\Rightarrow  \| v\|_{H^1} \hfill & \leq C'e^{2c|\tau|} \| g\|_{L^2}, 
\end{aligned}
\end{equation}which shows that  the inverse of the operator $(\Delta + \tau^2 \pm i{b}^2)$ is bounded from $L^2$ to $H^1_0$ by $C' e^{2c|\tau|}$.  By duality $(\Delta + \tau^2 \pm i {b}^2)^{-1}$ is bounded from  $H^{-1}$ to $L^2$ and using the equation from $H^{-1} $ to $H^1_0$ (by $C (1+|\tau|^2) e^{2c|\tau|}$). Coming back to the proof of Proposition~\ref{carle}
, since
$$ (\Delta + \tau^2 + i\widetilde{b}^2) u = f+i\widetilde{b}^2 u,$$
we get 
$$\| u\| _{H^1} \leq C e^{c'|\tau|} \bigl( \| f+ i \widetilde{b} u\|_{H^{-1} } + \| \widetilde{b}u\|_{H^{-1}}\bigr) \leq C e^{c'|\tau|} \bigl( \| f\|_{H^{-1} } + \| \widetilde{b}u\|_{L^2}\bigr)
$$
\end{proof}
We now can prove Theorem~\ref{th.2} 
and come back to the case $|\lambda| >M$. 
Multiplying the  last line in~\eqref{above} by $\overline{u}$ integrating by parts and taking the imaginary part gives
\begin{multline}\label{apriori2}
|\lambda| \int _{\Omega} a(x) |\nabla_x u|^2 dx= \pm \Im \bigl( g - \text{div} a \nabla_x f + i \lambda f , u\bigr)_{L^2} \\
\leq   \| g + \text{div} a \nabla_x f + i \lambda f\|_{H^{-1}} \| u\|_{H^1_0}
\leq C(1+ |\lambda|) \bigl( \| g\|_{L^2} + \| f\|_{H^1} \bigr) \| u \|_{H^1}.
\end{multline}
On the other hand we can choose a  non trivial smooth function $b\in C^\infty_0 ( \Omega)$ such that  $0\leq b\leq a$ (recall that $a\geq \delta$ on an open set $\omega$), and multiplying the  last line above in~\eqref{above} by $b^2 \overline{u}$ integrating by parts and taking the real  part gives
\begin{equation}
\lambda^2 \|bu\|^2_{L^2} = \| b\nabla_x u\|^2_{L^2} + \Im \bigl(a \nabla_x u, \nabla_x ( b^2 u) \bigr)_{L^2} + \Re \bigl(g+ i \lambda f , b^2 u\bigr),
+\Re \bigl( a \nabla_x f, \nabla_x (b^2 u)\bigr),
\end{equation}
from which we deduce (using $0\leq b \leq a$)
\begin{equation}
\begin{aligned}\lambda^2 \|bu\|^2_{L^2} &\leq C  \Bigl(\| a\nabla_x u\|^2_{L^2} + C\|a \nabla_x u\|_{L^2} \| bu\|_{L^2}\Bigr)\\
\hfill & +\bigl(\|g\|_{L^2}+  |\lambda|\| f\|_{L^2}) \|b u\|_{L^2} +\|f\|_{H^1}(\| a \nabla_x u\| + \|bu\|_{L^2}) \bigr),\\
\Rightarrow  \|bu\|^2_{L^2} &\leq C'  \Bigl(\| a\nabla_x u\|^2_{L^2} + (1+ |\lambda|) (\|g\|^2_{L^2}+  \| f\|^2_{H^1}) \Bigr).
\end{aligned}
\end{equation}

Since
$$\Delta u  +\lambda^2 u= g + \text{div} a \nabla_x f + i \lambda f -i \lambda \text{div} a \nabla_x u,
$$from Proposition~\ref{carle} we get 
\begin{multline}
\| u\|_{H^1} \leq C e^{c |\lambda|} \bigl(\| g + \text{div} a \nabla_x f + i \lambda f -i \lambda \text{div} a \nabla_x u\| _{H^{-1}}\bigr) + \| bu\|_{L^2}\\
\leq C (1+ |\lambda|) e^{c |\lambda|} \bigl( \| f\|_{H^1} + \| g\|_{L^2} + \| a \nabla_x u\|_{L^2} \bigr), \\
\leq C (1+ |\lambda|) e^{c |\lambda|} \bigl( \| f\|_{H^1} + \| g\|_{L^2} + C(1+ |\lambda|)^{1/2} \bigl( \| g\|_{L^2} + \| f\|_{H^1} \bigr)^{1/2} \| u \|^{1/2}_{H^1} \bigr),
\end{multline}
which implies 
$$ \| u\|_{H^1} \leq C'(1+ |\lambda|)^{3} e^{2c |\lambda|} \bigl(\| f\|_{H^1} + \| g\|_{L^2}\bigr),
$$ 
which ends the proof of Proposition~\ref{resolv.2} and hence of Theorem~\ref{th.2}.
\section{General results under geometric control assumption}\label{sec.4}
According to~\cite{BoTo10, BaDu08}  and Proposition~\ref{resolv.1}, to prove Theorem~\ref{th.0} it is enough  to prove the following high energy resolvent estimate
\begin{prop}\label{resolv.3} Assume that $a\in L^\infty$ satisfies $a\geq c>0$ on an open set $\omega$, and the geometric control assumption of Theorem~\ref{th.0} is satisfied.
Then  there exists $M, C, c>0$ such that for all $\lambda \in \mathbb{R}, |\lambda | \geq M$, the operator $\mathcal{A} - i \lambda$ is invertible from $D( \mathcal{A})$ to $\mathcal{H}$ with estimate
\begin{equation}\label{resolvter}
\| ( \mathcal{A} - i \lambda)^{-1} \|_{\mathcal{L}( \mathcal{H})} \leq C \lambda^2
\end{equation}
\end{prop}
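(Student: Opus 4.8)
The plan is to repeat the scheme used in the proof of Proposition~\ref{resolv.2}, simply replacing the Carleman black box (Proposition~\ref{carle}) by the classical geometric control resolvent estimate for the Helmholtz equation proved in Appendix~\ref{app.A}. Concretely, fix $0\le b\in C^\infty_0(\Omega)$ with $b\le a$ and whose support still controls $\Omega$ geometrically (such a $b$ exists under assumption A1); then $\Delta+\tau^2+i\tau b^2$ is invertible on $L^2(\Omega)$ with $\|(\Delta+\tau^2+i\tau b^2)^{-1}\|_{L^2\to L^2}\le C|\tau|^{-1}$, which — reading the right-hand side in divergence form — is equivalent to: for $h_0,H\in L^2(\Omega)$,
$$(\Delta+\tau^2)w=h_0+\operatorname{div}(H)\ \Longrightarrow\ \|w\|_{H^1}\le C\|h_0\|_{L^2}+C(1+|\tau|)\bigl(\|H\|_{L^2}+\|bw\|_{L^2}\bigr),$$
together with the sharper $L^2$-version $\|w\|_{L^2}\le C|\tau|^{-1}\|h_0\|_{L^2}+C\|H\|_{L^2}+C\|bw\|_{L^2}$, in which $\|w\|_{L^2}$ gains a power $|\tau|^{-1}$ over the data $h_0$. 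By Remark~\ref{rem.fredholm} it then suffices to bound the solutions of $(\mathcal A-i\lambda)U=F$, with $U=(u,v)$, $F=(f,g)$ and $|\lambda|\ge M$. By \eqref{above}, $v=f+i\lambda u$ and $u$ solves the inhomogeneous Helmholtz equation $(\Delta+\lambda^2)u=F_1+\operatorname{div}(F_2)$ with $F_1:=g+i\lambda f$ and $F_2:=-a\nabla_x f-i\lambda\,a\nabla_x u$, the Kelvin--Voigt contribution being thrown into the source $F_2$ and treated perturbatively; note $\|F_1\|_{L^2}\le C(1+|\lambda|)\|F\|_{\mathcal H}$.

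Two a priori inputs feed the black box. First, pairing $(\mathcal A-i\lambda)U=F$ with $U$ in $\mathcal H$ and taking the real part gives the dissipation identity $\|a^{1/2}\nabla_x v\|_{L^2}^2=-\Re\langle F,U\rangle_{\mathcal H}\le\|F\|_{\mathcal H}\|U\|_{\mathcal H}$; since $i\lambda\,a^{1/2}\nabla_x u=a^{1/2}\nabla_x v-a^{1/2}\nabla_x f$, this yields $|\lambda|\,\|a\nabla_x u\|_{L^2}\le C\bigl(\|F\|_{\mathcal H}^{1/2}\|U\|_{\mathcal H}^{1/2}+\|F\|_{\mathcal H}\bigr)$, hence
$$\|F_2\|_{L^2}\le C\bigl(\|F\|_{\mathcal H}+\|F\|_{\mathcal H}^{1/2}\|U\|_{\mathcal H}^{1/2}\bigr).$$
The key point is that $F_2$ is thereby controlled in $L^2$ with \emph{no} loss in $\lambda$: the extra power of $\lambda$ it carries relative to a classical damping is exactly compensated by the power of $|\lambda|$ gained in the dissipation identity. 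Second, exactly as in the proof of Proposition~\ref{resolv.2}, multiplying the Helmholtz equation by $b^2\bar u$, integrating by parts, taking the real part, and using $b\le a$ together with the first input, one bounds $\lambda^2\|bu\|_{L^2}^2$ by $C\bigl(\|a\nabla_x u\|_{L^2}^2+\|F\|_{\mathcal H}^2\bigr)$ up to terms already controlled, hence $\|bu\|_{L^2}\le C|\lambda|^{-1}\|F\|_{\mathcal H}+C|\lambda|^{-1}\|F\|_{\mathcal H}^{1/2}\|U\|_{\mathcal H}^{1/2}$ up to harmless lower-order contributions.

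Feeding $F_1$, $F_2$ and this bound for $\|bu\|_{L^2}$ into the black box with $\tau=\lambda$ — the $H^1$-version to estimate $\|u\|_{H^1}$, the $L^2$-version (with its $|\lambda|^{-1}$ gain) to estimate $\|u\|_{L^2}$ and hence $\|v\|_{L^2}=\|f+i\lambda u\|_{L^2}$ — produces, after inserting the bounds above,
$$\|U\|_{\mathcal H}\le C(1+|\lambda|)^2\|F\|_{\mathcal H}+C(1+|\lambda|)\|F\|_{\mathcal H}^{1/2}\|U\|_{\mathcal H}^{1/2}+\tfrac12\|U\|_{\mathcal H}.$$
A Young inequality applied to the middle term then gives $\|U\|_{\mathcal H}\le C(1+|\lambda|)^2\|F\|_{\mathcal H}$, which for $|\lambda|\ge M$ is the claimed estimate \eqref{resolvter}.

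The architecture is routine; the delicate step is the bookkeeping of the powers of $\lambda$, i.e. making sure that the estimate really closes at $\lambda^2$ and not at a worse polynomial. Compared with a classical damping $a(x)\partial_t u$, the Kelvin--Voigt term $-i\lambda\operatorname{div}(a\nabla_x u)$ is one power of $\lambda$ and two derivatives worse, and closing at $\lambda^2$ rests on two points that must be arranged with care: measuring the dissipation through $\|a^{1/2}\nabla_x v\|$ rather than through $|\lambda|^2\|a^{1/2}\nabla_x u\|$ (this is what produces the compensating $|\lambda|$ in the bound for $\|F_2\|_{L^2}$), and disposing of the geometric control estimate of Appendix~\ref{app.A} in the divergence-form version stated above, so that the source $\operatorname{div}(F_2)$ costs only one extra power of $\lambda$ when inverting the Helmholtz operator and the $L^2$-gain $|\tau|^{-1}$ is available to control $\|u\|_{L^2}$, hence $\|v\|_{L^2}$, without losing more than $\lambda^2$ overall. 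The black box itself is classical.
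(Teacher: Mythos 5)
Your argument is correct and closes at the claimed power $\lambda^2$, but it follows a somewhat different route through the details than the paper does. In substance you transplant the scheme of Section~\ref{sec.3} (proof of Proposition~\ref{resolv.2}): you keep the Helmholtz equation for $u$, write the Kelvin--Voigt contribution as a divergence-form source $\operatorname{div}F_2$ controlled via the dissipation identity, and manufacture the observation term $\|bu\|_{L^2}$ by the $b^2\bar u$ multiplier computation, feeding everything into the geometric-control resolvent in its $H^{-1}\to L^2/H^1$ dualized form. The paper instead writes the Helmholtz equation for $v=f+i\lambda u$, namely $\Delta v+\lambda^2 v=g+i\lambda^{-1}\Delta f-i\lambda\operatorname{div}a\nabla_x v$, argues by contradiction with normalized sequences, and obtains the observation term $\|v\|_{L^2(\omega)}=o(h)$ \emph{without any multiplier argument}: it interpolates the bound $\|v\|_{H^{-1}(\omega)}=O(h)$ (read off directly from the equation $i\lambda v=\Delta u+\operatorname{div}a\nabla_x v-g$) against the dissipation bound $\|\nabla_x v\|_{L^2(\omega)}=o(h)$ available because $a\geq c>0$ on $\omega$. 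Working with $v$ is what makes the observation term free of charge there, while working with $u$ forces you to redo the $b^2\bar u$ computation; both are legitimate, and both close at $\lambda^2$ for the same underlying reason you correctly isolate, namely that the dissipation is measured on $a^{1/2}\nabla_x v=a^{1/2}(\nabla_x f+i\lambda\nabla_x u)$, which supplies exactly the compensating power of $\lambda$ needed to absorb the extra $\lambda$ carried by the Kelvin--Voigt source. Two small points to tidy up if you write this in full: justify the existence of $b\in C^\infty_0(\Omega)$ with $b\leq a$ whose positivity set still controls $\Omega$ geometrically (a compactness argument on the ray space shrinks $\omega=\operatorname{int}\{a>\delta\}$ to a compactly contained subset that still controls), and note that the black box of Appendix~\ref{app.A} is stated with the observation $\|1_\omega u\|_{L^2}$, so you need the elementary passage from $1_\omega$ to $b^2$, which is exactly the substitution $(\Delta+\lambda^2+i\lambda b^2)u=f+i\lambda b^2u$ already performed there.
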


\begin{proof}{Proof of Proposition~\ref{resolv.3}}
 We shall use as a black box the following resolvent estimates for the Helmoltz operator

\begin{prop}\label{helmoltz-geo}Under the geometric control assumptions of Theorem~\ref{th.0},  there exists $C>0$ such that for all $\lambda \in \mathbb{R}, |\lambda | \geq 1$, for any solutions $(u,f,g) \in H^1_0 \times L^2 \times H^{-1}$ of 
$$( \Delta+ \lambda^2) u = f+ g,
$$
we have
$$ \| u\|_{L^2} + \frac 1 {1+ |\lambda|} \| u\|_{H^1} \leq \frac C{ 1+ |\lambda|}  \| f\|_{L^2}  + C \| g\|_{H^{-1}} + C \| u 1_{\omega} \|_{L^2}.
$$
\end{prop}
This result (with only the $L^2$ norm in the l.h.s. and with $g=0$) is reminiscent in the folklore of the propagation of semi-classical singularities litterature (see~\cite[Sections 3\&4]{Bu02} for a slightly different version). It can be shown that it is actually essentially equivalent to the geometric control condition on the domain $\omega$.   To be complete, we explain in Appendix~\ref{app.A} how to deduce it from the Bardos-Lebeau-Rauch exact controlability result~\cite{BaLeRa92}. 

We take as $H^1_0$ norm $\| u\|_{H^{1}_0} = \| \nabla _x u \|_{L^2}$ and we argue by contradiction. If~\eqref{resolvter} were not true, there would exist sequences $\lambda_n \rightarrow \pm \infty$, $(u_n, v_n)\in D( \mathcal{A}), (f_n, g_n ) \in H^1_0 \times L^2$ such that 
\begin{equation}
\label{contradict}
( \mathcal{A} - i \lambda_n ) \begin{pmatrix} u \\v \end{pmatrix} = \begin{pmatrix} f\\ g \end{pmatrix}, \quad  \|(u_n, v_n)\|_{ D( \mathcal{A})} =1, \quad \|  (f_n, g_n )\|_{ H^1_0 \times L^2} = o( \lambda_n ^{-2}).
\end{equation} 
From now, we assume for simplicity  that $\lambda_n \rightarrow + \infty$ denote by $h_n = \lambda_n^{-1} $ and drop for conciseness the index $n$. We have 
\begin{equation}\label{a-priori}
 \Re \Bigl( (\mathcal{A} - i \lambda ) \begin{pmatrix} u \\v \end{pmatrix}, \begin{pmatrix} u \\v \end{pmatrix}\Bigr)_{H^1_0\times L^2} 
 = \int _\Omega a (x) | \nabla_x v |^2 dx
\leq \| (u, v) \|_{H^1_0 \times L^2} \| (f,g) \|_{H^1_0 \times L^2} = o(h^2). 
\end{equation}
From~\eqref{above}, we have 
$$ i \lambda v= \Delta u + \text{div} a \nabla_x v- g,  $$
which from~\eqref{a-priori} implies that on any subdomain $\omega \subset \Omega$ 
$$ \| v \|_{H^{-1} ( \omega)} = O(h) + o(h^2).$$
If $a\geq c>0$ on $\omega$ as in Proposition~\ref{resolv.3}, by interpolation with~\eqref{a-priori},  we get 
$$ \| v \|_{L^2( \omega)} = o(h).$$

Again from ~\eqref{above}, we have 
\begin{equation}\label{droite}  \left\{ \begin{aligned} & v = f +i \lambda u \\
&\Delta v +  \lambda^2 v= g +i \lambda^{-1} \Delta f - i \lambda \text{div} a \nabla_x v\end{aligned} \right.
\end{equation}
which implies from Proposition~\ref{helmoltz-geo} 
\begin{multline}
\| v\|_{L^2} + \frac 1 {1+ |\lambda|} \| v\|_{H^1} \leq \frac C{ 1+ |\lambda|}  \| g\|_{L^2}  + C \| ih \Delta f -i h^{-1} \text{ div } a \nabla v  \|_{H^{-1}} + C \| v \|_{L^2(\omega)}\\
\leq \frac C{ 1+ |\lambda|}  \| g\|_{L^2}  + C \|h f\|_{H^1} +C  h^{-1} \| a \nabla v  \|_{L^2} + C \| v \|_{L^2(\omega)}\\ \leq o(h^3) +o(h^3) + o(1) + o(h) 
\end{multline}
This, combined with the relation $v= f+ i \lambda u$ (from~\eqref{above} implies that 
$$\| (u,v) \|_{H^1\times L^2} = \| (ih (f-v),v) \|_{H^1\times L^2} = o(1) $$ leading to a contradiction with the normalization of $(u,v)$ in $H^1\times L^2$ norm (see~\eqref{contradict})
\end{proof}
\begin{remark}
In the proof above, the non optimality appears clearly in the fact that the only term for which we do not have any slack is the term $\lambda \text{div} a \nabla_x v$ that we put in the r.h.s of the Helmoltz equation. To improve the result, we need to keep this term in the left. However, in this case this term becomes delicate to handle and this strategy requires some smoothness on $a$ (see~\cite{LiuRa, BuCh, Bu19}). 
\end{remark}

\section{Results on tori}\label{sec.5}
In this section we  show how the same approach can give (non optimal) decay rates on tori and prove Theorem~\ref{th.1}.  According to~\cite{BoTo10, BaDu08} (and the low frequency resolvent estimates in Proposition~\ref{resolv.1}), it is enough to prove the following high frequency resolvent estimate
\begin{prop}\label{resolv.4} Assume that $a\in L^\infty$ satisfies $a\geq c>0$ on an open set $\omega$.
Then  there exists $M, C, c>0$ such that for all $\lambda \in \mathbb{R}, |\lambda | \geq M$, the operator $\mathcal{A} - i \lambda$ is invertible from $D( \mathcal{A})$ to $\mathcal{H}$ with estimate
\begin{equation}\label{resolvtquar}
\| ( \mathcal{A} - i \lambda)^{-1} \|_{\mathcal{L}( \mathcal{H})} \leq C \lambda^4
\end{equation}
\end{prop}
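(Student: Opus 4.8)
The plan is to mirror, step for step, the black-box argument of Proposition~\ref{resolv.3}, replacing the geometric-control Helmholtz estimate (Proposition~\ref{helmoltz-geo}) by the resolvent estimate for the flat Laplacian on the torus of~\cite{AnMa14, BuZw12}. Combined with the low-frequency bound of Proposition~\ref{resolv.1} and the characterisation of~\cite{BoTo10, BaDu08}, a bound $\|(\mathcal{A}-i\lambda)^{-1}\|_{\mathcal{L}(\mathcal{H})}\le C\lambda^4$ for $|\lambda|\ge M$ yields $\|e^{t\mathcal{A}}U_0\|_{\mathcal{H}}\le Ct^{-k/4}\|U_0\|_{D(\mathcal{A}^k)}$, i.e. Theorem~\ref{th.1}. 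By Remark~\ref{rem.fredholm} it is enough to prove the a priori bound $(\mathcal{A}-i\lambda)(u,v)=(f,g)\Rightarrow\|(u,v)\|_{\mathcal{H}}\le C\lambda^4\|(f,g)\|_{\mathcal{H}}$ for $|\lambda|\ge M$, so I would argue by contradiction exactly as in Proposition~\ref{resolv.3}: take $\lambda_n\to+\infty$ (the case $\lambda_n\to-\infty$ being identical), $(u_n,v_n)\in D(\mathcal{A})$ with $\|(u_n,v_n)\|_{H^1_0\times L^2}=1$, $(f_n,g_n)\in H^1_0\times L^2$ with $\|(f_n,g_n)\|_{H^1_0\times L^2}=o(\lambda_n^{-4})$ and $(\mathcal{A}-i\lambda_n)(u_n,v_n)=(f_n,g_n)$, set $h_n=\lambda_n^{-1}$, and drop the index $n$.

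The black box I would invoke is the torus analogue of Proposition~\ref{helmoltz-geo}: reflecting the cube $\Omega$ into the torus $\prod_i(0,2a_i)$ and using the Schr\"odinger-type observability of~\cite{AnMa14, BuZw12}, for a non-empty open $\omega\subset\Omega$ there is $C>0$ such that every solution of $(\Delta+\lambda^2)w=\tilde f+\tilde g$ with $\tilde f\in L^2$, $\tilde g\in H^{-1}$, $|\lambda|\ge1$ satisfies
\[
\|w\|_{L^2}+\frac{1}{1+|\lambda|}\|w\|_{H^1}\ \le\ C\|\tilde f\|_{L^2}+C(1+|\lambda|)\|\tilde g\|_{H^{-1}}+C\|w\,1_\omega\|_{L^2}.
\]
This is exactly the estimate of Proposition~\ref{helmoltz-geo} with one extra power of $\lambda$ (the price of replacing geometric control by the torus geometry), and this single extra power is what turns the $\lambda^2$ of Proposition~\ref{resolv.3} into $\lambda^4$ below.

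Next I would run the two a priori estimates on the damping region just as before. The real part of $\bigl((\mathcal{A}-i\lambda)(u,v),(u,v)\bigr)_{\mathcal{H}}$ gives $\int_\Omega a|\nabla_x v|^2\,dx\le\|(f,g)\|_{\mathcal{H}}\|(u,v)\|_{\mathcal{H}}=o(h^4)$, hence $\|a\nabla_x v\|_{L^2}=o(h^2)$ and, since $a\ge c$ on $\omega$, $\|\nabla_x v\|_{L^2(\omega)}=o(h^2)$; rewriting the first line of~\eqref{above} as $i\lambda v=\Delta u+\mathrm{div}\,a\nabla_x v-g$ gives $\|v\|_{H^{-1}(\Omega)}\le h(\|u\|_{H^1}+\|a\nabla_x v\|_{L^2}+\|g\|_{H^{-1}})=O(h)$, and interpolating $L^2$ between $H^{-1}$ and $H^1$ with a cut-off supported in $\omega$ (using $\|v\|_{H^1(\omega)}\le\|\nabla_x v\|_{L^2(\omega)}+\|v\|_{L^2}=O(1)$) yields, after harmlessly shrinking $\omega$, $\|v\,1_\omega\|_{L^2}=O(h^{1/2})$. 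Since $v$ solves~\eqref{droite}, i.e. $(\Delta+\lambda^2)v=g+i\lambda^{-1}\Delta f-i\lambda\,\mathrm{div}\,a\nabla_x v$, I apply the black box with $\tilde f=g$ (so $\|\tilde f\|_{L^2}=o(h^4)$) and $\tilde g=i\lambda^{-1}\Delta f-i\lambda\,\mathrm{div}\,a\nabla_x v$ (so $\|\tilde g\|_{H^{-1}}\le\lambda^{-1}\|f\|_{H^1}+\lambda\|a\nabla_x v\|_{L^2}=o(h^5)+o(h)=o(h)$), which gives
\[
\|v\|_{L^2}+\frac{1}{1+|\lambda|}\|v\|_{H^1}\ \le\ C\,o(h^4)+C(1+|\lambda|)\,o(h)+C\,O(h^{1/2})\ =\ o(1).
\]
Hence $\|v\|_{L^2}=o(1)$ and $\|v\|_{H^1}=o(\lambda)$, and from $u=ih(f-v)$ (first line of~\eqref{above}) one gets $\|u\|_{H^1}\le h\|f\|_{H^1}+h\|v\|_{H^1}=o(1)$, contradicting $\|(u,v)\|_{H^1_0\times L^2}=1$ and proving Proposition~\ref{resolv.4}.

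The hard part — and the only point carrying no slack — is the same one noted after Proposition~\ref{resolv.3}: the genuinely non-perturbative Kelvin--Voigt term $\lambda\,\mathrm{div}\,a\nabla_x v$ must be placed in the source of the Helmholtz equation and can only be controlled through $\int_\Omega a|\nabla_x v|^2=o(h^4)$, and with the weaker torus observability its contribution $\lambda\cdot\lambda\|a\nabla_x v\|_{L^2}=h^{-2}o(h^2)=o(1)$ is exactly what forces the loss $\lambda^4$ (hence the rate $t^{-k/2}$, not expected to be optimal, improvable for piecewise smooth $a$ along the lines of~\cite{LiuRa, BuCh, Bu19}). The remaining issues are routine: the bookkeeping of powers of $h$, and the extraction of the observation term $\|v\,1_\omega\|_{L^2}$ from the damping a priori bound via interpolation. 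Finally, for the sharper $n=2$ case of Theorem~\ref{th.1}, where only $\int_\Omega a>0$ is assumed rather than \eqref{carre}, one would additionally need the version of the torus observability of~\cite{AnMa14, BuZw12} valid for sets of positive measure and a correspondingly adapted extraction of the observation.
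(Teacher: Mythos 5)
Your proposal is correct and follows essentially the same route as the paper, which proves Proposition~\ref{resolv.4} by rerunning the contradiction argument of Proposition~\ref{resolv.3} line by line with the torus black box (Proposition~\ref{helmoltz-tore}) in place of Proposition~\ref{helmoltz-geo}; your bookkeeping of the powers of $h$ (data $o(h^4)$, $\|a\nabla_x v\|_{L^2}=o(h^2)$, critical term $h^{-1}\cdot(1+|\lambda|)\,\|a\nabla_x v\|_{L^2}=h^{-2}o(h^2)=o(1)$) matches the paper's and correctly identifies the single extra power of $\lambda$ in the $H^{-1}$ term of the black box as the source of the loss from $\lambda^2$ to $\lambda^4$. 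The only cosmetic deviation is that your interpolation yields $\|v\,1_\omega\|_{L^2}=O(h^{1/2})$ where the paper asserts $o(h)$, but either bound suffices since only $o(1)$ is needed at that point.
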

The proof of Proposition~\ref{resolv.4} follows line by line the proof of Proposition~\ref{resolv.3} after replacing Proposition~\ref{helmoltz-geo} by the following 
\begin{prop}\label{helmoltz-tore}Under the geometric control assumptions of Theorem~\ref{th.0},  there exists $C>0$ such that for all $\lambda \in \mathbb{R}, |\lambda | \geq 1$, for any solutions $(u,f,g) \in H^1_0 \times L^2 \times H^{-1}$ of 
$$( \Delta+ \lambda^2) u = f+ g,
$$
we have
$$ \| u\|_{L^2} + \frac 1 {1+ |\lambda|} \| u\|_{H^1} \leq C \| f\|_{L^2}  + C (1+ |\lambda|) \| g\|_{H^{-1}} + C \| u 1_{\omega} \|_{L^2}.
$$
\end{prop}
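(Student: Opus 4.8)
The plan is to use, as the new black box replacing Proposition~\ref{helmoltz-geo}, the loss‑free observability resolvent estimate for $-\Delta$ on a torus from~\cite{AnMa14, BuZw12}. Unfolding the cube $\Omega=\prod_i(0,a_i)$ with Dirichlet conditions into the torus $\prod_i(0,2a_i)$ by the elementary reflection principle recalled in Section~\ref{S:intro}, the reflected damping region is an admissible control set and the cited estimate reads: there is $C>0$ such that for every $\lambda\in\mathbb{R}$, $|\lambda|\geq 1$, and every $w$ in the domain of $\Delta$,
\begin{equation}\label{torus-box}
\|w\|_{L^2}\leq C\|(\Delta+\lambda^2)w\|_{L^2}+C\|w\,1_\omega\|_{L^2},
\end{equation}
valid for $\omega$ an arbitrary non‑empty open set in any dimension, and, for $\Omega$ a square in dimension $2$, even for $\omega$ merely of positive measure; this covers both hypotheses of Theorem~\ref{th.1}. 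The only gap between~\eqref{torus-box} and the statement to be proved is that here $(\Delta+\lambda^2)u=f+g$ has a right‑hand side only in $H^{-1}$: the whole proof consists in paying one power of $\langle\lambda\rangle$ to reduce to an $L^2$ right‑hand side.

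To do this I would write $u=u_1+u_2$ with $u_1=\Pi u$, $u_2=(1-\Pi)u$, where $\Pi$ is the spectral projector of $-\Delta$ onto the eigenvalues lying in $[\lambda^2-1,\lambda^2+1]$. On the range of $1-\Pi$ the operator $\Delta+\lambda^2$ is boundedly invertible, and an elementary estimate on eigenvalues gives $\|(\Delta+\lambda^2)^{-1}(1-\Pi)\|_{L^2\to L^2}\leq 1$ and $\|(\Delta+\lambda^2)^{-1}(1-\Pi)\|_{H^{-1}\to L^2}\leq C(1+|\lambda|)$, whence, since $u_2=(\Delta+\lambda^2)^{-1}(1-\Pi)(f+g)$,
$$\|u_2\|_{L^2}\leq C\|f\|_{L^2}+C(1+|\lambda|)\|g\|_{H^{-1}}.$$
For the resonant part, $(\Delta+\lambda^2)u_1=\Pi f+\Pi g$ with $\|\Pi f\|_{L^2}\leq\|f\|_{L^2}$ and, because $\Pi$ localizes at eigenvalues $\leq\lambda^2+1$, $\|\Pi g\|_{L^2}\leq(2+\lambda^2)^{1/2}\|g\|_{H^{-1}}\leq C(1+|\lambda|)\|g\|_{H^{-1}}$. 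Applying~\eqref{torus-box} to $w=u_1$ and bounding $\|u_1\,1_\omega\|_{L^2}\leq\|u\,1_\omega\|_{L^2}+\|u_2\|_{L^2}$ gives
$$\|u_1\|_{L^2}\leq C\|f\|_{L^2}+C(1+|\lambda|)\|g\|_{H^{-1}}+C\|u\,1_\omega\|_{L^2},$$
and adding the two bounds yields the $L^2$ part of the conclusion.

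It then remains to pair $(\Delta+\lambda^2)u=f+g$ with $u$ and take real parts: $\|\nabla_x u\|_{L^2}^2=\lambda^2\|u\|_{L^2}^2-\Re(f,u)_{L^2}-\Re\langle g,u\rangle$, so Cauchy--Schwarz together with absorption of the term $\|g\|_{H^{-1}}\|u\|_{H^1}$ gives $\|u\|_{H^1}\leq C(1+|\lambda|)\|u\|_{L^2}+C\|f\|_{L^2}+C\|g\|_{H^{-1}}$; dividing by $1+|\lambda|$ and inserting the $L^2$ bound finishes the proof of Proposition~\ref{helmoltz-tore}. As in the text, the rest of Section~\ref{sec.5} — Proposition~\ref{resolv.4}, hence Theorem~\ref{th.1} — then follows line by line from the proof of Proposition~\ref{resolv.3}; the only effect of the extra $\langle\lambda\rangle$ losses in~\eqref{torus-box} relative to Proposition~\ref{helmoltz-geo} is that the final resolvent bound for $\mathcal{A}-i\lambda$ degrades from $\lambda^2$ to $\lambda^4$.

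The substantive ingredient is not in the argument above but in~\eqref{torus-box} itself: loss‑free control of the Helmholtz equation from an arbitrary open set (and from a positive‑measure set in dimension $2$) rests on the arithmetic of the lattice of eigenfrequencies and is quoted wholesale from~\cite{AnMa14, BuZw12}. Everything I contribute is soft spectral calculus together with one integration by parts; the only points needing care are the bookkeeping of the powers of $\langle\lambda\rangle$ so as to land exactly on the stated inequality, and the verification that reflection across the faces of the cube sends the damping region to an admissible control region on the doubled torus.
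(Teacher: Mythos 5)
Your proof is correct, and it reaches the stated estimate by a genuinely different route from the paper's, although both rest on the same deep input (observability of the Schr\"odinger equation on tori from an arbitrary open set, resp.\ a positive-measure set in dimension $2$). The paper quotes the \emph{stabilization} form of that input --- exponential decay for $e^{it(\Delta+ia)}$ (Theorem~\ref{th.3} in dimension $2$, Komornik's controllability~\cite{Ko97} in higher dimensions) --- converts it into the uniform bound $\|(\Delta+ia-z)^{-1}\|_{\mathcal{L}(L^2)}\leq C$ for real $z$, upgrades this to $H^{-1}\to L^2$ (by $C\sqrt{|z|}$) and $H^{-1}\to H^1$ (by $C|z|$) using the equation and duality, and concludes via the perturbation identity $(\Delta+\lambda^2)u=f+g\Leftrightarrow(\Delta+i1_\omega+\lambda^2)u=(f+i1_\omega u)+g$. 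You instead quote the \emph{Hautus/observability} form $\|w\|_{L^2}\leq C\|(\Delta+\lambda^2)w\|_{L^2}+C\|1_\omega w\|_{L^2}$; the two forms are equivalent by the standard resolvent characterization of observability~\cite{Mi12,RTT05}, so your wholesale quotation silently contains the conversion step that the paper spells out (and for the positive-measure case in dimension $2$ the only available source, \cite{BuZw18}, is stated in the damped form, so that conversion is genuinely needed there). The real divergence is in how the $H^{-1}$ datum $g$ is absorbed: the paper pays the factor $(1+|\lambda|)$ through the Sobolev mapping properties of the fixed damped resolvent, whereas you pay it through the spectral projector $\Pi$ of $-\Delta$ onto $[\lambda^2-1,\lambda^2+1]$, inverting $\Delta+\lambda^2$ off the window and using $\|\Pi g\|_{L^2}\leq C(1+|\lambda|)\|g\|_{H^{-1}}$; both land on exactly the stated powers, and your bounds on the off-window inverse ($L^2\to L^2$ by $1$, $H^{-1}\to L^2$ by $C(1+|\lambda|)$) check out. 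Your version makes the origin of the $(1+|\lambda|)$ loss more transparent and applies verbatim to any self-adjoint operator with discrete spectrum satisfying the observability resolvent bound, at the price of introducing spectral calculus; the paper's version never leaves the language of resolvent estimates. The closing energy identity for the $H^1$ part and the reflection from the Dirichlet cube to the doubled torus are as in the paper and unproblematic (the reflected control region is again a nonempty open, resp.\ positive-measure, set, which is all the quoted torus results require); the one point worth stating explicitly is that the constant in your black-box estimate is uniform down to $|\lambda|=1$, which holds because the Hautus condition is uniform over the whole real spectral axis.
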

In turn in dimension $2$, Proposition~\ref{helmoltz-tore} follows from. 
\begin{theorem}[\protect{~\cite[Theorem 2]{BuZw18}}]
\label{th.3}
 For $a\in L^2( \mathbb{T}^2)$, $ a \geq 0 $, $ \| a \|_{L^2} > 0$,  there exist $C, c>0$ such that for any $u_0 \in L^2( ,\mathbb{T}^2)$, the equation 
 \begin{equation}\label{damped} (i \partial_t + \Delta + ia) u=0, \qquad u |_{t=0} = u_0, 
\end{equation}
has a unique global solution $ u \in L^\infty( \mathbb{R}; L^2( ,\mathbb{T}^2))\cap L^4( ,\mathbb{T}^2; L^2_{\rm{loc}}(\mathbb{R})) $ and 
\begin{equation}\label{dampedbis}
 \| u\|_{L^2( ,\mathbb{T}^2)} (t) \leq C e^{-ct} \| u_0 \|_{L^2( ,\mathbb{T}^2)}.
 \end{equation}
\end{theorem}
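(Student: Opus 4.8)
The plan is to argue as in the semiclassical-measures analysis of the Schr\"odinger flow on tori~\cite{AnMa14, BuZw12}, in the form extended to rough ($L^2$) dampings in~\cite{BuZw18}: reduce the exponential decay~\eqref{dampedbis} to an observability estimate for the \emph{free} Schr\"odinger flow on $\mathbb{T}^2$, and prove the latter by contradiction, using semiclassical defect measures together with the integrable structure of the geodesic flow on the flat torus. Throughout, the mechanism that makes a merely $L^2$ damping admissible is Bourgain's $L^4(\mathbb{T}^2)$ Strichartz estimate, which on the (rational) torus holds with no derivative loss.

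First I would reduce~\eqref{dampedbis} to the observability inequality
$$\exists\, C,T>0:\qquad \|u_0\|_{L^2(\mathbb{T}^2)}^2 \;\le\; C\int_0^T \big\|\,\sqrt{a}\,e^{it\Delta}u_0\,\big\|_{L^2(\mathbb{T}^2)}^2\,dt, \qquad u_0\in L^2(\mathbb{T}^2).$$
Its right-hand side makes sense because $e^{it\Delta}$ maps $L^2(\mathbb{T}^2)$ into $L^4_{t,x}([0,T]\times\mathbb{T}^2)$ and $\sqrt a\in L^4(\mathbb{T}^2)$, so $\sqrt a\,e^{it\Delta}u_0\in L^2_{t,x}$ by H\"older. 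Granting this estimate, exponential stability of~\eqref{damped} is classical (Haraux; Gearhart--Pr\"uss): the energy identity $\tfrac{d}{dt}\|u\|_{L^2}^2=-2\|\sqrt a\,u\|_{L^2}^2$ turns the observability estimate for the damped flow into $\|u(T)\|_{L^2}^2\le(1-\delta)\|u_0\|_{L^2}^2$, which iterates to~\eqref{dampedbis}; observability for the damped flow follows from observability for the free flow by a Duhamel perturbation, once more absorbing the damping term via the $L^4$ bound; and that $\sqrt a\,u\equiv0$ forces $u\equiv0$ is the elementary fact that a $-\Delta$-eigenfunction on $\mathbb{T}^2$ cannot vanish on the positive-measure set $\{a>0\}$.

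To prove the observability estimate, I would negate it and, after a dyadic frequency decomposition $|D_x|\sim 2^j$ — whose resummation into the full estimate is exactly where the loss-free $L^4(\mathbb{T}^2)$ bound enters — produce a sequence $u_h$ with $\|u_h\|_{L^2}=1$, spectrally localized at frequency $\sim h^{-1}$, such that $\int_0^T\|\sqrt a\,e^{it\Delta}u_h\|_{L^2}^2\,dt\to0$. Let $\mu$ be a time-averaged semiclassical measure of the family $(e^{it\Delta}u_h)_h$ on $\mathbb{T}^2\times\mathbb{S}^1$: it is a probability measure, invariant under the geodesic flow $\Phi_s(x,\xi)=(x+s\xi,\xi)$, and it satisfies $\int a\,d\mu=0$. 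For $a\in L^2$ this last relation only makes sense thanks to the structure result of~\cite{AnMa14} (in the rough case,~\cite{BuZw18}): such $\mu$ is absolutely continuous in $x$ along each rational invariant torus.

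It remains to show $\mu\equiv0$. Over an irrational direction $\xi$, the flow $\Phi_s$ on $\mathbb{T}^2\times\{\xi\}$ is uniquely ergodic with invariant measure $dx\otimes\delta_\xi$, so $\mu$ restricted there is a multiple of $dx\otimes\delta_\xi$; since $\int_{\mathbb{T}^2}a\,dx>0$, the constraint $\int a\,d\mu=0$ kills it. Over a rational direction, a modular change of variables puts it at $e_1$, the flow becoming translation in $x_1$; second-microlocalizing in the transverse frequency and averaging in $x_1$ produces a measure on $\mathbb{T}^1_{x_2}\times\mathbb{R}$ invariant under the completely integrable effective one-dimensional Schr\"odinger flow in $x_2$ and giving zero mass where $\langle a\rangle(x_2):=\int_{\mathbb{T}^1}a(\cdot,x_2)$ is positive. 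As $\int_{\mathbb{T}^2}a>0$ forces $\langle a\rangle>0$ on a positive-measure set of $x_2$, and one-dimensional observability from such a set is elementary (trigonometric polynomials do not vanish on positive-measure sets; the relevant one-dimensional invariant measures are Lebesgue), this measure vanishes too, so $\mu\equiv0$, contradicting $\mu(\mathbb{T}^2\times\mathbb{S}^1)=1$. The main obstacle is precisely this rational-direction step carried out with a non-smooth damping: one must make sense of products of $\sqrt a$ with frequency-localized solutions and of the averaged potential $\langle a\rangle$, and the absence of loss in the $L^4(\mathbb{T}^2)$ Strichartz estimate — special to the flat (rational) torus — is what makes this, as well as the frequency resummation, possible; the irrational-direction case and the semigroup reduction are comparatively routine.
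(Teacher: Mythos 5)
This statement is not proved in the paper at all: it is quoted verbatim from \cite[Theorem 2]{BuZw18} and used strictly as a black box, its only role being to yield the torus resolvent estimate of Proposition~\ref{helmoltz-tore}. Your outline is, in substance, a faithful summary of the proof given in that reference (and its antecedents \cite{AnMa14, BuZw12}): reduction of exponential decay to an observability inequality for the free flow, well-posedness and absorption of the rough damping via the loss-free $L^4(\mathbb{T}^2)$ Strichartz estimate, and a contradiction argument with time-averaged semiclassical measures in which irrational directions are handled by unique ergodicity and rational directions by second microlocalization. So the strategy is the correct (and essentially the only known) one; just note that the genuinely hard points --- giving meaning to $\int a\,d\mu=0$ for $a\in L^2$ via the $L^2_x$ density of the time-averaged measure, the second-microlocal step along rational directions with non-smooth $a$, and the dyadic resummation --- are precisely the content of \cite{BuZw18}, to which your sketch defers rather than supplying independent detail.
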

Indeed it is well known that exponential stabilization for the group $e^{it( \Delta +i a) }$ is equivalent to a resolvent estimate
\begin{equation}\label{resolv-tore}
\exists C; \forall z \in \mathbb{R}, \| (\Delta + ia - z)^{-1}\|_{\mathcal{L} (L^2)}\leq C 
\end{equation}

which in turn using the equation $(\Delta + ia - z)u =f$ 
implies 
$$ \| \nabla_x u \|^2_{L^2} \leq C |z| \| u\|_{L^2}^2 ,
$$ hence by duality the resolvent is also bounded from $H^{-1} $ to $L^2$ by $C\sqrt{|z|}$ and using again the equation we get the boundedness from $H^{-1}$ to $H^1$ by $C|z|$. Now  
$$(\Delta   + \lambda^2 ) u = f +g \Leftrightarrow (\Delta +i1_\omega  + \lambda^2 ) u = (f + iau) +g .$$ We deduce 
$$ \| u\|_{L^2} + \frac 1 {1+ |\lambda|} \| u\|_{H^1} \leq C \| f\|_{L^2}  + C(1+ |\lambda|) \| g\|_{H^{-1}} + C \| 1_\omega u  \|_{L^2}.
$$
Finally a simple reflection principle for functions satisfying Dirichlet boundary conditions on the cube $K$ gives periodic functions on the torus $2K$, and Proposition~\ref{helmoltz-tore} follows. 

In higher dimensions, we need to assume that $a\geq \delta >0$ on a small open set $\omega$ for which we can apply the exact controlability result in~\cite{Ko97} (see also~\cite{AnMa14} for general tori),  which in turn imply the stabilisation result~\eqref{dampedbis} (see~\cite[Section 4]{BuZw18}) which as previously implies the resolvent estimate~\eqref{resolv-tore} and the rest of the proof is the same.
\appendix
\section{The propagation estimate Proposition~\ref{helmoltz-geo}} \label{app.A}
We first show that Proposition~\ref{helmoltz-geo} follows from the weaker 
\begin{prop}\label{helmoltz-weak}
Under the geometric control assumptions of Theorem~\ref{th.0},  there exists $C, M>0$ such that for all $\lambda \in \mathbb{R}, |\lambda | \geq M$, for any solutions $(u,f) \in H^1_0 \times L^2 $ of 
$$( \Delta+ \lambda^2) u = f,
$$
we have
$$ \| u\|_{L^2} \leq \frac C{ 1+ |\lambda|}  \| f\|_{L^2}   + C \| u 1_{\omega} \|_{L^2}.
$$
\end{prop}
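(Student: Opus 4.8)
The plan is to establish the \emph{a priori} bound for solutions of $(\Delta+\lambda^2)u=f$, which by Remark~\ref{rem.fredholm} suffices since the Dirichlet Laplacian has compact resolvent. I would argue by contradiction through a semiclassical compactness argument — the high-frequency counterpart of Lebeau's analysis of the damped wave equation~\cite{Le96} — and reduce everything to the geometric control assumption A1. An alternative, the one announced in the text, is to quote the Bardos--Lebeau--Rauch observability inequality~\cite{BaLeRa92} and convert it to a statement on the stationary equation; I indicate that route at the end.

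Suppose the estimate fails for every $C,M$. Then there are $\lambda_n$, $|\lambda_n|\to\infty$ (say $\lambda_n\to+\infty$, the other sign being identical), and $u_n\in H^1_0(\Omega)$ with $(\Delta+\lambda_n^2)u_n=f_n$, $\|u_n\|_{L^2}=1$, and $(1+|\lambda_n|)^{-1}\|f_n\|_{L^2}+\|u_n\mathbf 1_\omega\|_{L^2}\to 0$. With $h_n=\lambda_n^{-1}$ this reads $(h_n^2\Delta+1)u_n=r_n$ with $\|r_n\|_{L^2}=o(h_n)$. Pairing with $\overline{u_n}$ gives $h_n^2\|\nabla u_n\|_{L^2}^2=1-\Re(r_n,u_n)_{L^2}\to 1$, so $(u_n)$ is bounded in the semiclassical Sobolev space $H^1_{h_n}(\Omega)$; and since $h^2\Delta+1$ is elliptic off the characteristic set $\Sigma=\{|\xi|=1\}$, functional calculus for the Dirichlet Laplacian shows the spectral projections of $u_n$ onto $\{h_n^2\Delta\le 1/4\}$ and onto $\{h_n^2\Delta\ge 4\}$ have $L^2$ norm $O(\|r_n\|_{L^2})=o(1)$: thus $(u_n)$ is $h_n$-oscillating, up to the boundary.

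Extracting a subsequence, $(u_n)$ has a semiclassical defect measure $\mu$ on the cosphere bundle of $\Omega$, the boundary being incorporated as in~\cite{Bu02}. The $h_n$-oscillation gives $\mu$ total mass $1$ and supports it in $\Sigma$; the bound $\|r_n\|_{L^2}=o(h_n)$ makes $\mu$ invariant under the Melrose--Sj\"ostrand generalized bicharacteristic (billiard) flow, with no loss of mass at $\partial\Omega$ because the boundary is non-characteristic — this is where $C^3$ regularity of $\partial\Omega$ enters, while, as noted in the earlier remark, uniqueness of the flow is not needed for this propagation statement. Finally $\|u_n\mathbf 1_\omega\|_{L^2}\to 0$ forces $\mu$ to vanish over the open set $\omega$, whereas A1 says every generalized bicharacteristic reaches the region over $\omega$ in finite time; by invariance $\mu\equiv 0$, contradicting $\mu(\Sigma)=1$. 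This contradiction proves the proposition.

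I expect the main obstacle to be entirely at the boundary: proving that $(u_n)$ is $h_n$-oscillating up to $\partial\Omega$, that $\mu$ lives on the correct compactified space, and that it propagates through reflections without loss of mass (in particular the proper handling of the glancing set). These facts are classical but technical (G\'erard, Lebeau, Burq--Lebeau, Miller); all the geometry of $\Omega$ enters only in the final line, through A1. As the self-contained alternative announced in the text, one applies the BLR observability inequality $\|(w(0),\partial_t w(0))\|_{H^1_0\times L^2}^2\le C\int_0^T\|w\|_{L^2(\omega)}^2\,dt$ for solutions of $(\partial_t^2-\Delta)w=0$ with Dirichlet conditions to a wave built from $u$ — for instance testing the wave equation against $\psi(t)e^{i\lambda t}u(x)$ with $\psi\in C_0^\infty((0,T))$, which has zero Cauchy data at $t=0,T$ and source $(\psi''+2i\lambda\psi')e^{i\lambda t}u-\psi e^{i\lambda t}f$ — thereby turning observability into the desired resolvent estimate; this is the route I would write out in full here.
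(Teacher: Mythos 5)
Your argument is correct in outline, but it is a genuinely different route from the one the paper actually writes out. The paper does not run a microlocal defect measure argument at all: it quotes the Bardos--Lebeau--Rauch \emph{stabilization} theorem for the damped wave equation $(\partial_t^2-\Delta)w + a\partial_t w=0$ as a black box, deduces from exponential decay that the generator $\mathcal{B}$ of that semigroup has uniformly bounded resolvent on $i\mathbb{R}$, specializes this to the stationary statement $(\Delta+\lambda^2+i\lambda a)u=g \Rightarrow \|u\|_{H^1}+|\lambda|\,\|u\|_{L^2}\leq C\|g\|_{L^2}$, and then concludes by the one-line perturbation $(\Delta+\lambda^2)u=f \Rightarrow (\Delta+\lambda^2+i\lambda 1_\omega)u=f+i\lambda 1_\omega u$. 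What the paper's route buys is brevity and a single, standard citation; what your defect-measure route buys is self-containedness at the level of ideas (you see exactly where A1 is used, namely that every generalized bicharacteristic meets the region over $\omega$) and it extends verbatim to variable coefficients; its cost is that all the difficulty is pushed into the boundary microlocal analysis (existence of the measure up to $\partial\Omega$, $h$-oscillation, propagation with no mass loss through hyperbolic and glancing points, cf.~\cite{GeLe93-1, BuLe01, Bu02, Le96}), which is heavier machinery than the paper invokes --- even if, ultimately, \cite{BaLeRa92} rests on the same Melrose--Sj\"ostrand propagation. Your contradiction setup is sound: the normalization gives $(h^2\Delta+1)u_n=o(h)_{L^2}$, which is exactly the threshold for invariance of the measure, and you are right that one only needs the support of $\mu$ to be a union of generalized bicharacteristics, so uniqueness of the flow is not required. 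Your secondary sketch (observability applied to $\psi(t)e^{i\lambda t}u$) is the route the paper only alludes to via \cite[(6.8)]{BuZw03} and \cite[Corollary 3.10]{Mi12}; if you write it out, note that the source term $2i\lambda\psi'e^{i\lambda t}u$ is of size $\lambda\|u\|_{L^2}$ in $L^2_{t,x}$, so you must recover a factor $\lambda^{-1}$ by an integration by parts in time (or by measuring the source in a dual norm and using $\|u\|_{H^{-1}}\lesssim \lambda^{-1}\|u\|_{L^2}+\lambda^{-2}\|f\|_{L^2}$, which follows from the equation); otherwise the inequality closes into the useless $\|u\|_{L^2}\lesssim\|u\|_{L^2}$.
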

\begin{proof}
Indeed, from Proposition~\ref{helmoltz-weak} we deduce that the operator 
$$P_{\pm}= (\Delta \pm i1_\omega \lambda + \lambda^2): H^1_0 \rightarrow H^{-1}$$
is invertible with inverse bounded on $L^2$ by $C/(1+ |\lambda|)$. Indeed,
$$|\lambda | \| 1_\omega u \|_{L^2}^2 = |\Im (P_{\pm}u, u)_{L^2} \leq \| Pu \|_{L^2} \| u \|_{L^2},$$
and from Proposition~\ref{helmoltz-weak}, if $P_{\pm}u= f$ (hence $(\Delta + \lambda^2) u = f  \mp ia \lambda u $
$$\| u\|_{L^2} \leq \frac {C} {(1+ |\lambda|} \|f\|_{L^2} + (C+1) \| 1_\omega u\|_{L^2} \leq  \frac {C} {(1+ |\lambda|)} \|f\|_{L^2} + (C+1)\Bigl( \frac{ \| f\|} { \lambda} \Bigr)^{1/2} \| u\|_{L^2} ^{1/2}$$
$$ \Rightarrow \| u\|_{L^2} \leq \frac {C'} {(1+ |\lambda|)} \|f\|_{L^2} .$$
Now 
$$ \bigl|\| \nabla_x u \|_{L^2}^2 - \lambda^2 \|u \|_{L^2}\bigr| = | \Re ( Pu, u)_{L^2}|  \leq \| Pu\|_{L^2} \|u\|_{L^2} $$
and we deduce that $P_{\pm}^{-1} $ is bounded in norm from $L^2$ to $H^1$ by $C'$, hence by duality from $H^{-1} $ to $L^2$ by $C'$ and again using the equation we get the boundedness from $H^{-1} $ to $H^1_0$ by $C' ( 1+ |\lambda|)$. 

Finally, as previously to conclude the proof of Proposition~\ref{helmoltz-geo}, we just write 
$$(\Delta + \lambda^2 ) u = f+ g \Leftrightarrow (\Delta + \lambda^2 + i1_{\omega} ) u= (f+ i1_\omega u) + g,$$
and we conclude using the bounds from $L^2$ to $L^2$ and $H^1$ for the contribution of $(f+ i1_\omega u)$ and from $H^{-1}$ to $L^2$ and $H^1$for the contribution of $g$.
\end{proof}
It remains to show that the geometric control property implies Proposition~\ref{helmoltz-weak}. It follows from the fact that exact controlability is essentially equivalent  (see~\cite[(6.8)]{BuZw03} for the boundary control version and~\cite[Corollary 3.10]{Mi12} for an abstract version while on the other hand from  Bardos-Lebeau-Rauch geometric control result~\cite{BaLeRa92},  it is equivalent to geometric control for wave equations.

We consider the damped wave equation 
\begin{equation}\label{damped-waves}
\left\{ \begin{aligned} &(\partial_t ^2 - \Delta) u - \text{ div} a(x) \nabla_x  \partial_t u =0 \\
& u \mid_{t=0} = u_0 \in H^1( \Omega),  \quad \partial_t u \mid_{t=0} = u_1 \in L^2 ( \Omega)\\
&u \mid_{\partial \Omega}  =0 \end{aligned}
\right. 
\end{equation}
with a non negative damping term $a(x)$, and energy
$$E(u) (t) = \int_{\Omega} (|\nabla_x u | ^2 + |\partial_t u| ^2) dx $$
The solution can be written as 
\begin{equation}
 V(t) = \begin{pmatrix} u \\ \partial_t u \end{pmatrix} = e^{\mathcal{B} t } \begin{pmatrix} u_0 \\ u_1 \end{pmatrix},
 \end{equation}
 where the generator $\mathcal{B}$ of the semi-group is given by 
 $$ \mathcal{B} = \begin{pmatrix} 0 & 1 \\ \Delta &  a  \end{pmatrix}  \begin{pmatrix} u_0 \\ u_1 \end{pmatrix} ,$$
 with domain 
 $$ {D}( \mathcal{B}) =  H^1_0\cap H^2 \times H^1_0. $$ 

\begin{theorem} [Geometric control implies exponential decay for damped wave equations \cite{BaLeRa92}] Assume that the domain $\omega$ controls geometrically $\Omega$. Then the energy of solutions to~\eqref{damped-waves} decays exponentially
$$ E((u_0, u_1)) (t) - E((u_0, u_1)) (0) = - \int_{0}^t\int_{\Omega} a(x) |\nabla_x \partial_t u |^2 (s,x) ds.$$
\end{theorem}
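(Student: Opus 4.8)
The plan is to reduce the exponential energy decay to an \emph{observability} inequality for the damped wave equation \eqref{damped-waves} and then to invoke the Bardos--Lebeau--Rauch propagation theorem; the content of the statement is the exponential bound $E((u_0,u_1))(t)\le C e^{-ct} E((u_0,u_1))(0)$. First I would use that the energy identity makes $t\mapsto E((u_0,u_1))(t)$ non-increasing, with
\[
E((u_0,u_1))(0)-E((u_0,u_1))(T)=\int_0^T\!\!\int_\Omega a(x)\,|\partial_t u|^2\,dx\,ds ,
\]
so that, by the semigroup property, it is enough to exhibit $T>0$ and $C>0$ such that every solution satisfies the observability inequality $E((u_0,u_1))(0)\le C\int_0^T\!\int_\Omega a(x)\,|\partial_t u|^2\,dx\,ds$. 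Granting this, one gets $E((u_0,u_1))(T)\le(1-C^{-1})E((u_0,u_1))(0)$; applying the same bound on the successive intervals $[kT,(k+1)T]$ and using that $e^{\mathcal B T}$ is a contraction yields $E((u_0,u_1))(kT)\le(1-C^{-1})^{k}E((u_0,u_1))(0)$, i.e. exponential decay with $c=-T^{-1}\log(1-C^{-1})$.

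To prove the observability inequality I would argue by contradiction and microlocalize. Suppose there are solutions with $E((u^n_0,u^n_1))(0)=1$ and $\int_0^T\!\int_\Omega a\,|\partial_t u^n|^2\,dx\,ds\to 0$. After extraction, $(u^n,\partial_t u^n)$ converges weakly in $L^\infty((0,T);H^1_0\times L^2)$ to a solution $(u,\partial_t u)$ of the wave equation, and one associates to the sequence a microlocal defect (Wigner) measure $\mu$ on $T^*\!\big((0,T)\times\Omega\big)\setminus 0$. Classically $\mu$ is a non-negative measure carried by the characteristic cone $\{\tau^2=|\xi|^2\}$, and — this is the analytic core of \cite{BaLeRa92}, resting on the Melrose--Sj\"ostrand analysis of generalized bicharacteristics up to the boundary \cite{MeSj78} — it is invariant under the generalized bicharacteristic flow. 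The vanishing of the damped quantity forces $\mu\equiv 0$ over $(0,T)\times\omega$; choosing $T$ strictly larger than the geometric control time of $\omega$, every generalized bicharacteristic meets $(0,T)\times\omega$, so flow invariance propagates this to $\mu\equiv 0$ everywhere. Hence $(u^n,\partial_t u^n)\to(u,\partial_t u)$ \emph{strongly} in $L^2_{\mathrm{loc}}((0,T);H^1_0\times L^2)$: no energy escapes, and the weak limit satisfies $a\,\partial_t u\equiv 0$.

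It then remains to rule out a non-zero weak limit. Since $w=\partial_t u$ solves the wave equation and vanishes on $(0,T)\times\omega$, unique continuation (Holmgren together with the same propagation argument, applied now to $w$) gives $w\equiv 0$ on $(0,T)\times\Omega$ once $T$ is taken large enough; then $u$ is $t$-independent with $\Delta u=0$ and $u\in H^1_0(\Omega)$, so $u\equiv 0$. Combined with the strong convergence obtained above this forces $E((u^n_0,u^n_1))(0)\to 0$, contradicting the normalization, and the observability inequality — hence the theorem — follows.

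The main obstacle is the microlocal step: propagating the defect measure across the boundary through glancing and gliding rays requires the generalized-bicharacteristic machinery of \cite{MeSj78} that underpins \cite{BaLeRa92}, and this is the only genuinely hard ingredient; the reduction to observability, the unique-continuation step and the compactness-uniqueness packaging are all routine. If one prefers to avoid microlocal analysis altogether, the observability inequality can simply be quoted from \cite{BaLeRa92}, or derived from exact controllability of the wave equation exactly as it is invoked earlier in this appendix.
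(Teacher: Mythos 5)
Your outline is correct, but it is worth saying up front that the paper offers no proof of this statement at all: it is quoted verbatim from \cite{BaLeRa92} and used as a black box (indeed the displayed formula in the statement is merely the energy identity; the actual conclusion, which you correctly supply, is $E((u_0,u_1))(t)\le Ce^{-ct}E((u_0,u_1))(0)$). Your reduction of exponential decay to the observability inequality $E(0)\le C\int_0^T\!\int_\Omega a|\partial_t u|^2$ via the energy identity and iteration over intervals $[kT,(k+1)T]$ is the standard and correct packaging, and your compactness--uniqueness argument (microlocal defect measure supported on the characteristic set, invariance under the generalized bicharacteristic flow, vanishing over $(0,T)\times\omega$, hence everywhere once $T$ exceeds the control time, followed by unique continuation for the weak limit) is precisely the architecture of the Bardos--Lebeau--Rauch proof. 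You are right to identify the propagation of the measure up to the boundary through glancing points as the only genuinely hard ingredient; since that step is exactly the content of \cite{MeSj78} and \cite{BaLeRa92}, your sketch does not constitute an independent proof but rather an accurate road map to the cited one, which is all the paper itself intends here. Two remarks. First, you implicitly (and correctly) read the equation as the \emph{classically} damped wave equation $\partial_t^2u-\Delta u+a\partial_tu=0$, with dissipation $\int a|\partial_t u|^2$: this matches the generator $\mathcal{B}$ and the way the theorem is used afterwards, even though the labelled equation \eqref{damped-waves} is (by an evident copy-paste slip) written with the Kelvin--Voigt term $\operatorname{div}a\nabla\partial_tu$; had you taken the statement at face value with Kelvin--Voigt damping, \cite{BaLeRa92} would not apply and the result would be false at this level of generality of $a$ (that harder question is the subject of \cite{BuCh}). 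Second, for the unique-continuation step the more standard route in \cite{BaLeRa92} is to observe that the space of invisible solutions is finite-dimensional and flow-invariant, hence spanned by eigenfunctions of $-\Delta$ vanishing on $\omega$, which vanish by elliptic unique continuation; your Holmgren variant also works for the flat Laplacian provided $T$ exceeds the Holmgren time, so either is acceptable.
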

From this we deduce that the resolvent $(\mathcal{B}- i \lambda)^{-1}$ exists for any $\lambda \in \mathbb{R}$ and satisfies 
$$\exists C>0 ; \forall \lambda \in \mathbb{R}. \| (\mathcal{B}- i \lambda)^{-1}\|_{\mathcal{L} ( H^1_0 \times L^2)} \leq C.$$
Applying this estimate to $f=0, g\in L^2$ we get for $(u,v) \in H^2 \cap H^1_0 \times L^2$
$$ -i \lambda u +v =0, \quad \Delta u +a v =g \Rightarrow ( \| u\|_{H^1} +\| v\|_{L^2} ) \leq C \| g\|_{L^2} $$
or equivalently for $u \in H^2 \cap H^1_0$
$$ ( \Delta  + \lambda^2 + i \lambda a)  u =g \Rightarrow ( \| u\|_{H^1} + \lambda \| u\|_{L^2} ) \leq C \| g\|_{L^2}.$$
Now
$$ (\Delta + \lambda^2) u =f \Rightarrow ( \Delta  + \lambda^2 + i \lambda 1_{\omega})  u =f + i 1_{\omega}  \lambda u,$$
which implies Proposition~\ref{helmoltz-weak}
 \section{Optimality of the logarithmic decay}\label{app.B}
In this section we give a simple example where we can ensure that the logarithmic decay in Theorem~\ref{th.2} cannot be improved.
Here we take $\Omega$ an arbitrary (non circular) ellipse.
$$ \Omega = \{ (x,y); ax^2+ by^2 \leq 1, \qquad 0< a<b\}$$ In this case it is well known (see the works by Keller and Rubinov~\cite{KeRu60} or more recently~\cite[Theorem 3.1]{NgGr13} that there exists a sequence of eigenfunctions $e_n$ associated to eigenvalues $\lambda^2_n\rightarrow + \infty $ which concentrate exponentially on the small axis of the ellipse, $I= \{0\} \times [-b^{-1}, b^{-1}]$. For all neighborhood $V$ of $I$,
\begin{equation}
\label{concentration} \| e_n \|_{L^2} =1, \qquad \int_{V^c} |e_n |^2 + |\nabla_x e_n | ^2 dx \leq C e^{-c \lambda_n}.
\end{equation} 
\begin{prop} Assume now that there exist a neighboorhood of $I$, $W$ such that $a\mid_{V}=0$. Then the exponential decay rate in Theorem~\ref{th.2} is optimal.
\end{prop}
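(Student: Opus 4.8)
The plan is to build, for a suitable sequence $\lambda_n\to+\infty$, exponentially accurate quasimodes of $\mathcal{A}$ at the imaginary points $i\lambda_n$ out of the eigenfunctions $e_n$ of~\eqref{concentration}, and then to turn them, by a Duhamel argument, into a lower bound on the energy that forces the logarithmic rate of Theorem~\ref{th.2} to be sharp.

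\emph{Quasimodes.} We may take the damping to be smooth: $a\in C^\infty_0(\Omega)$, $a\geq 0$, with $a\geq\delta$ on some open set $\omega\Subset\Omega$ disjoint from $I$, so that $a$ vanishes on a neighborhood $W$ of $I$; such an $a$ is admissible in Theorem~\ref{th.2}. Since $W^c$ is compact and disjoint from $I$ we have $\dist(\supp a,I)=:\rho>0$, hence by~\eqref{concentration} together with interior and boundary elliptic regularity for $(\Delta+\lambda_n^2)e_n=0$, $e_n|_{\partial\Omega}=0$, the function $e_n$ and all its derivatives are $O(e^{-c\lambda_n})$ on $\supp a$. Set
$$ U_n=\begin{pmatrix} e_n\\ i\lambda_n e_n\end{pmatrix}. $$
Since $a\in C^\infty_0(\Omega)$ and $e_n\in C^\infty(\overline\Omega)$ with $\Delta^\ell e_n=(-\lambda_n^2)^\ell e_n$ vanishing on $\partial\Omega$, one checks directly that $U_n\in D(\mathcal{A}^k)$ for every $k$, and that
$$ (\mathcal{A}-i\lambda_n)U_n=\begin{pmatrix}0\\ i\lambda_n\,\div(a\nabla e_n)\end{pmatrix}=:F_n , $$
where $F_n$ and all its $\mathcal{A}$-iterates are supported in $\supp a$ and have every relevant norm bounded by $Ce^{-c\lambda_n}$. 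On the other hand $\|e_n\|_{L^2}=1$ and $\|\nabla e_n\|_{L^2}=\lambda_n$ give $\|U_n\|_{\mathcal{H}}^2=2\lambda_n^2$, and since $\mathcal{A}U_n=i\lambda_n U_n+F_n$ we obtain $\|\mathcal{A}^j U_n\|_{\mathcal{H}}\leq C\lambda_n^{j+1}$, hence $\|U_n\|_{D(\mathcal{A}^k)}\leq C\lambda_n^{k+1}$.

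\emph{Duhamel and conclusion.} The semigroup $e^{t\mathcal{A}}$ is a contraction on $\mathcal{H}$ (the energy is non-increasing). Setting $w(t)=e^{t\mathcal{A}}U_n-e^{i\lambda_n t}U_n$ we have $w(0)=0$ and $\dot w=\mathcal{A}w+e^{i\lambda_n t}(\mathcal{A}-i\lambda_n)U_n$, whence
$$ w(t)=\int_0^t e^{(t-s)\mathcal{A}}e^{i\lambda_n s}F_n\,ds,\qquad \|w(t)\|_{\mathcal{H}}\leq t\,\|F_n\|_{\mathcal{H}}\leq Ct\,e^{-c\lambda_n}. $$
Therefore $E(U_n)(t)^{1/2}=\|e^{t\mathcal{A}}U_n\|_{\mathcal{H}}\geq\|U_n\|_{\mathcal{H}}-\|w(t)\|_{\mathcal{H}}\geq \sqrt{2}\,\lambda_n-Cte^{-c\lambda_n}\geq\lambda_n$ for $0\leq t\leq T_n:=c'\lambda_n e^{c\lambda_n}$. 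Now suppose, for a contradiction, that for some $k$ there were a function $f$ with $f(t)=o\big((\log t)^{-2k}\big)$ and $E(U)(t)\leq f(t)\|U\|_{D(\mathcal{A}^k)}^2$ for all $U\in D(\mathcal{A}^k)$. Taking $U=U_n$, $t=T_n$ yields $\lambda_n^2\leq f(T_n)\,C\lambda_n^{2(k+1)}$, i.e. $f(T_n)\geq c\lambda_n^{-2k}$; since $\log T_n\asymp\lambda_n$ this gives $f(T_n)(\log T_n)^{2k}\geq c>0$, a contradiction. Hence the rate $(\log t)^{-2k}$ in Theorem~\ref{th.2} cannot be improved.

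The only point requiring care is the verification that $U_n$ and all of $\mathcal{A}^jU_n$ lie in the relevant domains and that $F_n$ together with its $\mathcal{A}$-iterates is exponentially small; both are transparent here because $a$ is smooth and supported at positive distance from $I$, where~\eqref{concentration} and elliptic regularity make $e_n$ exponentially small in every norm. For a general $a\in L^\infty$ vanishing only on a neighborhood of $I$ one argues identically after replacing $e_n$ by $\chi e_n$, with $\chi\in C^\infty$ equal to $1$ on a neighborhood of $I\cup\partial\Omega$ and supported in $\{a=0\}$: the commutator term $[\Delta,\chi]e_n=2\nabla\chi\cdot\nabla e_n+(\Delta\chi)e_n$ is again exponentially small since $\nabla\chi$ is supported away from $I$, and the Dirichlet conditions defining $D(\mathcal{A}^k)$ hold because $\chi\equiv1$ near $\partial\Omega$.
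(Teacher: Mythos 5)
Your proof is correct, and the quasimode at its core is the same one the paper uses (the eigenfunctions $e_n$ concentrating on the minor axis, turned into approximate eigenvectors of $\mathcal{A}$ at $i\lambda_n$ with $O(e^{-c\lambda_n})$ error), but you close the argument by a genuinely different route. The paper cuts off with $\chi$ supported in $\{a=0\}$, so that $\text{div}\, a\nabla(\chi e_n)\equiv 0$ and the only error is the commutator $[\Delta,\chi]e_n$; it then stops at the statement that the exponential factor in the resolvent estimate of Proposition~\ref{resolv.2} cannot be improved, and invokes without detail the standard correspondence between resolvent growth along $i\mathbb{R}$ and semigroup decay rates. You instead (i) for smooth $a$ dispense with the cutoff, using interior and boundary elliptic regularity for $(\Delta+\lambda_n^2)e_n=0$ to make $\text{div}(a\nabla e_n)$ exponentially small in every norm on $\supp a$, and (ii) replace the abstract semigroup theory by an explicit Duhamel comparison of $e^{t\mathcal{A}}U_n$ with $e^{i\lambda_n t}U_n$, which keeps the energy above $\lambda_n^2$ up to times $T_n\sim\lambda_n e^{c\lambda_n}$ while $\|U_n\|_{D(\mathcal{A}^k)}\leq C\lambda_n^{k+1}$. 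This yields the optimality statement directly and quantitatively, and it has the merit of making explicit the $D(\mathcal{A}^k)$ bookkeeping that the black-box route hides. The one point to tighten is your treatment of general $a\in L^\infty$: you cannot in general take $\chi\equiv 1$ on a neighborhood of all of $\partial\Omega$ while keeping $\supp\chi\subset\{a=0\}$, since $a$ is only assumed to vanish near $I$ (whose endpoints lie on $\partial\Omega$) and not near the rest of the boundary. What you actually need, so that $[\Delta,\chi]e_n=2\nabla\chi\cdot\nabla e_n+(\Delta\chi)e_n$ lies in $H^1_0$ and the iterates $\mathcal{A}^jU_n$ remain in the domain, is only that $\chi$ be locally constant near $\partial\Omega$ (equal to $1$ near the two endpoints of $I$, where $a$ does vanish, and to $0$ on the remaining part of the boundary meeting $\supp\chi$); with that correction your argument goes through.
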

Indeed, Let $\chi \in C^\infty_0 (W)$ equal to $1$ in a (smaller) neighborhood $V$ of $I$. The choice $i\lambda u_n = v_n = e_n$ gives 
\begin{equation}
(\mathcal{A} - i \lambda_n )\begin{pmatrix}u_n\\ v_n\end{pmatrix} = \begin{pmatrix} 0\\ g_n\end{pmatrix}, \| (u_n, v_n)\|_{H^1\times L^2} \sim 1,
\end{equation}
with 
$$ g_n =-i \lambda_n ^{-1} [\Delta , \chi ] e_n + \text{div} a \nabla (\chi e_n)= -i \lambda_n ^{-1} [\Delta , \chi ] e_n= O(e^{-c \lambda_n})_{L^2},
$$ where we used the fact that $[\Delta, \chi]$ is supported in $V^c$ and~\eqref{concentration} to get an exponential decay for $[\Delta , \chi ] e_n $, and the fact that $a$ vanishes on the support of $\chi$ which implies $ \text{div} a \nabla (\chi e_n)=0$.
This quasi-mode construction shows that the exponential factor in Proposition~\ref{resolv.2} cannot be improved, which in turn shows that the logarithmic factor in Theorem~\ref{th.2}  cannot be improved.

\def\cprime{$'$} \def\cprime{$'$}


\begin{thebibliography}{10}

\bibitem{AmHaRo18}
Kaïs Ammari, Fathi Hassine, and Luc Robbiano.
\newblock Stabilization for the wave equation with singular kelvin-voigt
  damping.
\newblock {\em preprint}, arxiv:1805.10430.

\bibitem{AL14}
N.~Anantharaman and M.~L{\'e}autaud.
\newblock Sharp polynomial decay rates for the damped wave equation on the
  torus.
\newblock {\em Anal. PDE}, 7(1):159--214, 2014.
\newblock With an appendix by St{\'e}phane Nonnenmacher.

\bibitem{AnMa14}
N.~Anantharaman and F.~Maci{\`a}.
\newblock Semiclassical measures for the schr{\"o}dinger equation on the torus.
\newblock {\em Journal of the European Mathematical Society}, 16(6):1253--1288,
  2014.

\bibitem{BaLeRa92}
C.~Bardos, G.~Lebeau, and J.~Rauch.
\newblock Sharp sufficient conditions for the observation, control and
  stabilization of waves from the boundary.
\newblock {\em {S.I.A.M.} Journal of Control and Optimization},
  30(5):1024--1065, 1992.

\bibitem{BaDu08}
Charles J.~K. Batty and Thomas Duyckaerts.
\newblock Non-uniform stability for bounded semi-groups on {B}anach spaces.
\newblock {\em J. Evol. Equ.}, 8(4):765--780, 2008.

\bibitem{BoTo10}
A.~Borichev and Y.~Tomilov.
\newblock Optimal polynomial decay of functions and operator semigroups.
\newblock {\em Math. Ann.}, 347(2):455--478, 2010.

%
\bibitem{Buproc97}
N~Burq.
\newblock Mesures semi-classiques et mesures de d\'efaut.
\newblock {\em Ast\'erisque}, 245:Exp.\ No.\ 826, 4, 167--195, 1997.
\newblock S\'eminaire Bourbaki, Vol.\ 1996/97.


\bibitem{Bu98}
N.~Burq.
\newblock D{\'e}croissance de l'{\'e}nergie locale de l'{\'e}quation des ondes
  pour le probl{\`e}me ext{\'e}rieur et absence de r{\'e}sonance au voisinage
  du r{\'e}el.
\newblock {\em Acta Mathematica}, 180:1--29, 1998.

\bibitem{Bu02}
N.~Burq 
\newblock Semi-classical estimates for the resolvent in non trapping
  geometries.
\newblock {\em Int. Math. Res. Notices}, 5:221--241, 2002.

\bibitem{Bu19}
N.~Burq and C. Sun
\newblock Decays for Kelvin-Voigt damped wave equations {II} : the geometric
  control condition.
\newblock {\em preprint in preparation}, 2020.

\bibitem{BuCh}
N.~Burq and H.~Christianson.
\newblock Imperfect geometric control and overdamping for the damped wave
  equation.
\newblock {\em Comm. Math. Phys.}, 336(1):101--130, 2015.


\bibitem{BuGe18}
N.~Burq and P.~G{\'e}rard.
\newblock Stabilization of wave equations on the torus with rough dampings.
\newblock {\em preprint arxiv https://arxiv.org/abs/1801.00983}, 2018.

\bibitem{BuHi05}
N.~Burq and M.~Hitrik.
\newblock Energy decay for damped wave equations on partially rectangular
  domains.
\newblock {\em Math. Res. Lett.}, 14(1):35--47, 2007.

\bibitem{BuLe01}
N.~Burq and G.~Lebeau.
\newblock Mesures de d\'efaut de compacit\'e, application au syst\`eme de
  {L}am\'e.
\newblock {\em Ann. Sci. \'Ecole Norm. Sup. (4)}, 34(6):817--870, 2001.


\bibitem{BuZw03}
N.~Burq and M.~Zworski.
\newblock Geometric control in the presence of a black box.
\newblock {\em Jour. of the American Math. Society}, 17(2):443--471, 2004.

%
\bibitem{BuZw12}
N.~Burq and M.~Zworski.
\newblock Control for {S}chr\"odinger operators on tori.
\newblock {\em Math. Res. Lett.}, 19(2):309--324, 2012.

\bibitem{BuZw18}
N.~Burq and M.~Zworski.
\newblock Rough controls for schr{\"o}dinger equations on $2$-tori.
\newblock {\em Annales H. Lebesgue, to appear, arxiv:}, 2018.

\bibitem{GeLe93-1}
P.~G{\'e}rard and E.~Leichtnam.
\newblock Ergodic properties of eigenfunctions for the {D}irichlet problem.
\newblock {\em Duke Mathematical Journal}, 71:559--607, 1993.

\bibitem{KeRu60}
J. J. Keller and S. I. Rubinow, 
\newblock Asymptotic solution of eigenvalue problems,
\newblock { \em  Ann. Phys.} 9 (1960), pp. 24-75.
\bibitem{Ko97}
V.~Komornik.
\newblock On the exact internal controllability of a petrowsky system.
\newblock {\em J. Math. Pures Appl.}, 71-9(4):331--342, 1992.

\bibitem{LeRo95}
G.~Lebeau and L.~Robbiano.
\newblock Stabilisation de l'{\'e}quation des ondes par le bord.
\newblock {\em Duke Math. J.}, 86:465--491, 1997.

\bibitem{Le96}
 Lebeau, G.
 \newblock  {\'E}quation des ondes amorties. (French) [Damped wave equation] 
 \newblock {\em Algebraic and geometric methods in mathematical physics} (Kaciveli, 1993), 73?109, Math. Phys. Stud., 19, Kluwer Acad. Publ., Dordrecht, 1996.

\bibitem{LiLiZh17}
 Liu, Kangsheng; Liu, Zhuangyi; Zhang, Qiong 
 \newblock {Eventual differentiability of a string with local Kelvin-Voigt damping.}
 \newblock {\em  ESAIM Control Optim. Calc. Var.}  23 (2017), no. 2, 443?454

\bibitem{LiuRa}
K.~Liu and B.~Rao.
\newblock Exponential stability for the wave equation with local kelvin-voigt
  damping.
\newblock {\em Z. Angew. Math. Phys}, 57(3):419--432, 2006.
\bibitem{LiZh16}
 Liu, Zhuangyi; Zhang, Qiong 
 \newblock
 Stability of a string with local Kelvin-Voigt damping and nonsmooth coefficient at interface. 
 \newblock {\em SIAM J. Control Optim.} 54 (2016), no. 4, 1859?1871.
 
\bibitem{LiuRa05}
Zhuangyi Liu and Bopeng Rao.
\newblock Characterization of polynomial decay rate for the solution of linear
  evolution equation.
\newblock {\em Z. Angew. Math. Phys.}, 56(4):630--644, 2005.

\bibitem{MeSj78}
R.B. Melrose and J.~Sj{\"o}strand.
\newblock Singularities of boundary value problems {I}.
\newblock {\em Communications in Pure Applied Mathematics}, 31:593--617, 1978.

\bibitem{NgGr13}
 Nguyen, B.-T.; Grebenkov, D. S.
 \newblock Localization of Laplacian eigenfunctions in circular, spherical, and elliptical domains.
 \newblock {\em  SIAM J. Appl. Math.} 73 (2013), no. 2, 780?803.
\bibitem{RoZh18}
L.~Robbiano and Q.. Zhang.
\newblock Logarithmic decay of a wave equation with kelvin-voigt damping.
\newblock {\em preprint}, arXiv:1809.03196.

\bibitem{Te16}
Louis Tebou.
\newblock Stabilization of some elastodynamic systems with localized
  {K}elvin-{V}oigt damping.
\newblock {\em Discrete Contin. Dyn. Syst.}, 36(12):7117--7136, 2016.

\bibitem{Mi05}
L. Miller. 
\newblock Controllability cost of conservative systems: resolvent condition and transmutation. 
\newblock
{\em Journal of Functional Analysis}, 218, 2:425?444, 2005.

\bibitem{Mi12}
L. Miller 
\newblock{Resolvent conditions for the control of unitary groups and their approximations}
\newblock {\em J. Spectr. Theory} 2 (2012), 1?55

\bibitem{RTT05}
 Ramdani, K.; Takahashi, T.; Tenenbaum, G. and  Tucsnak, M. 
 \newblock A spectral approach for the exact observability of infinite-dimensional systems with skew-adjoint generator.
 \newblock {\em  J. Funct. Anal.} 226 (2005), no. 1, 193?229
 
 \bibitem{TuWe09}
 Tucsnak, Marius and  Weiss, George
 \newblock 
Observation and control for operator semigroups.
\newblock{ \em 
Birkh{\" a}user Advanced Texts: Basler Lehrbücher. [Birkh{\" a}user Advanced Texts: Basel Textbooks} Birkh{\" a}user Verlag, Basel, 2009. xii+483 pp. ISBN: 978-3-7643-8993-2 

\bibitem{Zh18}
Qiong Zhang.
\newblock Polynomial decay of an elastic/viscoelastic waves interaction system.
\newblock {\em Z. Angew. Math. Phys.}, 69(4):Art. 88, 10, 2018.

\end{thebibliography}
\end{document}